\newtheorem{definition}{Definition}
\newtheorem{remark}{Remark}
\newtheorem{Lemma}{Lemma}
\newtheorem{theorem}{Theorem}
\newtheorem{proposition}{Proposition}
\newtheorem{corol}{Corollary}
\def\vint_#1{\mathchoice%
          {\mathop{\kern 0.2em\vrule width 0.6em height 0.69678ex depth -0.58065ex
                  \kern -0.8em \intop}\nolimits_{\kern -0.4em#1}}%
          {\mathop{\kern 0.1em\vrule width 0.5em height 0.69678ex depth -0.60387ex
                  \kern -0.6em \intop}\nolimits_{#1}}%
          {\mathop{\kern 0.1em\vrule width 0.5em height 0.69678ex
              depth -0.60387ex
                  \kern -0.6em \intop}\nolimits_{#1}}%
          {\mathop{\kern 0.1em\vrule width 0.5em height 0.69678ex depth -0.60387ex
                  \kern -0.6em \intop}\nolimits_{#1}}}
\def\vintslides_#1{\mathchoice%
          {\mathop{\kern 0.1em\vrule width 0.5em height 0.697ex depth -0.581ex
                  \kern -0.6em \intop}\nolimits_{\kern -0.4em#1}}%
          {\mathop{\kern 0.1em\vrule width 0.3em height 0.697ex depth -0.604ex
                  \kern -0.4em \intop}\nolimits_{#1}}%
          {\mathop{\kern 0.1em\vrule width 0.3em height 0.697ex depth -0.604ex
                  \kern -0.4em \intop}\nolimits_{#1}}%
          {\mathop{\kern 0.1em\vrule width 0.3em height 0.697ex depth -0.604ex
                  \kern -0.4em \intop}\nolimits_{#1}}}
\newcommand{\kint}{\vint}
\newcommand{\aveint}[2]{\mathchoice%
          {\mathop{\kern 0.2em\vrule width 0.6em height 0.69678ex depth -0.58065ex
                  \kern -0.8em \intop}\nolimits_{\kern -0.45em#1}^{#2}}%
          {\mathop{\kern 0.1em\vrule width 0.5em height 0.69678ex depth -0.60387ex
                  \kern -0.6em \intop}\nolimits_{#1}^{#2}}%
          {\mathop{\kern 0.1em\vrule width 0.5em height 0.69678ex depth -0.60387ex
                  \kern -0.6em \intop}\nolimits_{#1}^{#2}}%
          {\mathop{\kern 0.1em\vrule width 0.5em height 0.69678ex depth -0.60387ex
                  \kern -0.6em \intop}\nolimits_{#1}^{#2}}}
\newcommand{\ol}{\overline}
\newcommand{\Om}{\Omega}
\newcommand{\R}{\mathbb{R}}
\newcommand{\E}{\mathbb{E}}
\newcommand{\W}{\mathbb{W}}
\renewcommand{\P}{\mathbb{P}}
\newcommand{\eps}{\varepsilon}
\newcommand{\half}{\frac{1}{2}}
\newcommand{\I}{\textrm{I}}
\newcommand{\II}{\textrm{II}}
\begin{document}
\large
\title{A game theoretical approximation for a parabolic/elliptic system with different operators}
\author{Alfredo Miranda and Julio D. Rossi}

\address{Alfredo Miranda and Julio D. Rossi
\hfill\break\indent
Departamento  de Matem{\'a}tica, FCEyN,
Universidad de Buenos Aires,
\hfill\break\indent Pabellon I, Ciudad Universitaria (1428),
Buenos Aires, Argentina.}

\email{{\tt amiranda@dm.uba.ar, jrossi@dm.uba.ar}}

\date{}

\maketitle

\centerline{\it To Juan Luis Vazquez in his 75th anniversary with our best wishes.}

\begin{abstract}
In this paper we find viscosity solutions to a coupled system composed by two equations,
the first one is parabolic and driven by the infinity Laplacian while the second one is elliptic and involves the usual
Laplacian. We prove that there is a two-player zero-sum game played in two different boards with different rules in each board 
(in the first one we play a Tug-of-War game
taking the number of plays into consideration and in the second board we move at random) 
whose value functions converge uniformly to a viscosity solution
to the PDE system.
\end{abstract}

\section{Introduction}

Our main goal in this paper is to provide a probabilistic approach to find solutions to
 an elliptic/parabolic system in which we have two different operators. We deal with viscosity solutions to 
\begin{equation}
\label{ED1}
 \left\lbrace
\begin{array}{ll}
\displaystyle \frac{\partial u}{\partial t} (x,t) - \displaystyle \half \Delta_{\infty}u(x,t) + u(x,t) - v(x,t)=0 \qquad &  \ x \in \Omega,\, t>0,  \\[10pt]
-   \displaystyle \frac{\kappa}{2} \Delta v(x,t) + v(x,t) - u(x,t)=0  \qquad &  \ x \in \Omega, \, t>0, \\[10pt]
u(x,t) = f(x,t) \qquad & \ x \in \partial \Omega, \, t > 0,  \\[10pt]
v(x,t) = g(x,t) \qquad & \ x \in \partial \Omega, \, t > 0, \\[10pt]
u(x,0) = u_0 (x) \qquad &\  x \in \Omega.
\end{array}
\right.
\end{equation}

Notice that this system involves two differential operators, the usual Laplacian 
$$ \Delta \phi = \sum\limits_{i=1}^{N} \partial_{x_{i}x_{i}}\phi$$ and the infinity Laplacian (see \cite{Cran})
 $$
 \Delta_{\infty}\phi =  
 \langle D^{2}\phi \frac{\nabla \phi}{|\nabla \phi |},\frac{\nabla \phi}{| \nabla \phi |}\rangle 
 =\frac{1}{| \nabla \phi |^2}\sum\limits_{i,j=1}^{N} \partial_{x_{i}}\phi \partial_{x_{i}x_{j}}\phi
 \partial_{x_{j}}\phi.$$ 
In addition, the first equation is parabolic (it involves a time derivative of $u$) while the second one
is elliptic (the time variable can be viewed just as a parameter in the equation for $v$).
Remark that there is an initial condition for $u$ but not for $v$. This is due to the fact that
the first equation is parabolic while the second one is elliptic. 

There is a large literature highlighting the interplay between Probability and Partial Differential 
Equations. In fact, there is a deep connection between classical potential theory and probability theory.
The main idea is that harmonic functions 
and martingales have something in common: the validity of mean value formulas.
A well known fact 
is that $u$ is harmonic, that is $u$ verifies the PDE $\Delta u =0$, 
if and only if it verifies the mean value property
$
u(x) =
\frac{1}{|B_\varepsilon (x) |} \int_{B_\varepsilon (x) } u(y) \, dy.
$
In fact, we can relax
this condition by requiring that it holds asymptotically
$
u(x) =
\frac{1}{|B_\varepsilon (x) |} \int_{B_\varepsilon (x) } u(y) \, dy + o(\varepsilon^2),
$
as $\varepsilon\to 0$.
The connection between the Laplacian and the Bownian motion or with the limit
of random walks as the step size goes to zero is also well known, see \cite{Kac}. 

Nowadays, it is known that the ideas and techniques used for linear equations 
can be extended to cover nonlinear problems.
For a mean value property for the $p-$Laplacian (including 
the infinity Laplacian) we refer to \cite{I,dTLin,KMP,LM} and \cite{MPR}. 
See also \cite{BChMR} for mean value formulas for Monge-Ampere. 
These mean value formulas are closely related to game theoretical approximations of solutions
to PDEs. 
For a probabilistic
approximation of the infinity Laplacian
there is a game (called Tug-of-War game in the literature) that was introduced in \cite{PSSW} and 
generalized in several directions to cover other equations, like the $p-$Laplacian, in \cite{TPSS,AS,BR,ChGAR,LPS,MPR,
MPRa,MPRb,Mitake,PS,R}. There are also parabolic versions of these results, we refer to \cite{BER,MPRparab}. 
For a general overview of the subject we refer to the recent
books \cite{BRLibro}  and \cite{Lewicka} and references therein.

For systems of equations there are less references available. This is due to the fact that for general
fully nonlinear systems there is no viscosity theory and also that the estimates needed to pass to
the limit in the approximations are more involved (usually for systems one needs to obtain estimates for both
components simultaneously). For elliptic systems we quote \cite{Mitake} and \cite{nosotros}. 
In \cite{Mitake} a coupled elliptic system involving the infinity Laplacian for every component was analyzed; while in
\cite{nosotros} an elliptic system involving two different operators (the Laplacian and the infinity Laplacian) was studied.
This paper can be viewed as a follow up of these two references. The main difference with 
\cite{Mitake} and \cite{nosotros} is that here we have to tackle a time dependent problem and then
we have to take extra care when we obtain estimates for the components of the approximations
(we need to prove estimates in space and time simultaneously).

 The system \eqref{ED1} is not variational (there is no associated energy). Therefore, to find solutions
  one possibility is to use monotonicity methods based in a comparison principle (Perron's argument). Here we will look at the system
  in a different way and to obtain existence of solutions we find an approximation using game theory.
 This approach not only gives existence of solutions but it also provide us with a description that
 yields some intuition on the behaviour of the solutions.  
 At this point we note that we will understand solutions to the system in a viscosity 
 sense. This is natural since the infinity Laplacian is not variational (see Section \ref{sect-prelim} for
 the precise definition). Once we have existence of solutions, we prove a comparison principle that implies uniqueness. 
 
Now let us describe the game that is associated with \eqref{ED1}. Fix $T>0$ (we aim to obtain a continuous viscosity solution to \eqref{ED1}
in $\overline{\Omega}\times [0,T]$ with $T$ arbitrary). 
The game is a two-player zero-sum game played in two different boards. We will call board to a cylinder of the form $\Om\times (0,T]\subseteq \R^N \times[0,T]$. 
Fix a parameter, $\eps>0$ and two final payoff functions 
$ \ol{f},  \ol{g} : \mathbb{R}^N \setminus \Omega\ \times[0,T] \mapsto \mathbb{R}$ (one for each board $ \ol{f}$ for the first board and $ \ol{g}$ for the second one). These payoff functions  $ \ol{f}$ and $ \ol{g}$ are just two Lipschitz extensions to $\{\mathbb{R}^N \setminus \Omega\}\times[0,T]$ of the boundary data $f$ and $g$ that appear in \eqref{ED1}. Fix also a final payoff $u_0: \Omega \mapsto \mathbb{R}$ 
to be used in the first board if we reach a nonpositive time. If we are playing at a point $(x,t)\in\Om\times(0,T]$ in the first board, 
with probability $1-\eps^2$ the players play 
the Tug of War game (a fair coin is tossed and the winner choses the new position of the game in
the ball $B_{\eps}(x)$), but descending to the level $t-\eps^2$, that is, the next position of the game will be a point that looks like $(y,t-\eps^2)$ with $y\in B_{\eps}(x)$ that depends on the choice of the players and the toss of a fair coin (the winner of the coin toss chooses the next position 
in $B_{\eps}(x)$). On the other hand, playing at a point $(x,t)\in\Om\times(0,T]$ in the second board, with probability $1-\eps^2$ the next position is random in $B_{\eps}(x)$ at the same time level. 
That is, the next point looks like $(y,t)$, where $y\in B_{\eps}(x)$ is chosen at random  (with uniform distribution). On top of these rules, being in either of the two boards we can jump to the other board with probability $\eps^2$. That is, if we are at $(x,t)\in\Om\times(0,T]$ on one of the boards, with probability $\eps^2$ we will go to $(x,t)$ but on the other board. In some cases we will add an extra index and denote a point by $(x,t,j)$ where $j=1,2$ indicates the board. In this context, if we are at $(x,t,1)$, with probability $1-\eps^2$ we will go to $(y,t-\eps^2,1)$ with $y\in B_{\eps}(x)$ is chosen playing Tug of War, and with probability $\eps^2$ we will go to $(x,t,2)$. We can think in a similar way if we are at $(x,t,2)$, with probability $1-\eps^2$ we will go to $(y,t,1)$ with $y\in B_{\eps}(x)$ chosen at random, and with probability $\eps^2$ we will go back to the first board at $(x,t,1)$. The game continues until the position of the token leaves the domain, or the time is below zero (this can only happen playing in the first board) and at this final
point $(x_\tau,t_\tau,j)$ with $t_\tau>0$ the first player gets $\ol{f}(x_\tau, t_\tau)$ and the second player $- \ol{f}(x_\tau, t_\tau)$ if they are
playing in the first board (that is, if $j=1$) while they obtain $\ol{g}(x_\tau, t_\tau)$ and $- \ol{g}(x_\tau, t_\tau)$ if they are
playing in the second board ($j=2$), or when $t_\tau \leq 0$ the first player gets $u_0(x_\tau)$ and the second player $-u_0(x_\tau)$.
(we can think that Player $\II$ pays to Player $\I$ the amount prescribed by $\ol{f}$, $\ol{g}$ or $u_0$, according to the final position of the game).

This game has a expected value (the best outcome of the game that both players expect to obtain
playing their best,
see Section \ref{sect-Game} for a precise definition). In this case the value of the game
is given by a pair of functions $(u^\eps, v^\eps)$, defined in $\Omega\times(0,T]$ that depends on the size of the steps, $\eps$. 
For $(x_0,t_0) \in \Omega\times(0,T]$, the value of $u^\eps (x_0,t_0)$
is the expected outcome of the game when it starts at $(x_0,t_0)$ in the first board, while $v^\eps (x_0,t_0)$ is the expected value 
starting at $(x_0,t_0)$ in the second board.

Our first theorem ensures that this game has a well-defined value and that this pair of functions $(u^\eps, v^\eps)$
 verifies a system of equations (called the dynamic programming principle (DPP)
in the literature).

\begin{theorem} \label{teo.dpp2}
The game has value 
$
(u^\eps, v^\eps)
$
that is the unique solution to  
\begin{equation}
\label{DPP}
 \left\lbrace
\begin{array}{ll}
 \displaystyle u^{\eps}(x,t)=\eps^{2}v^{\eps}(x,t)+(1-\eps^{2})\Big\{\half \sup_{y \in B_{\eps}(x)}u^{\eps}(y,t-\eps^{2}) + \half \inf_{y \in B_{\eps}(x)}u^{\eps}(y,t-\eps^{2})\Big\}, 
  &  x \in \Omega, t\in(0,T],  \\[10pt]
   \displaystyle v^{\eps}(x,t)=\eps^{2}u^{\eps}(x,t)+(1-\eps^{2})\kint_{B_{\eps}(x)}v^{\eps}(y,t)dy,   &   x \in \Omega,t\in(0,T],  \\[10pt]
u^{\eps}(x,t) = \ol{f}(x,t) \qquad & x \in \R^{N} \backslash \Omega, t > 0,  \\[10pt]
v^{\eps}(x,t) = \ol{g}(x,t) \qquad &  x \in \R^{N} \backslash \Omega, t > 0,\\[10pt]
u^{\eps}(x,t)=u_0 (x) \qquad &  x \in \Omega, t\leq 0.
\end{array}
\right.
\end{equation}
\end{theorem}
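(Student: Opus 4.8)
The plan is to establish three facts in turn: the game terminates almost surely with finite expected duration, the system \eqref{DPP} has exactly one bounded solution, and this solution coincides with the value of the game. The linchpin for all of them is a monotonicity-in-time observation: the time coordinate is non-increasing along any play, and it strictly decreases by $\eps^2$ exactly on those steps in which the token sits in the first board and does not switch boards. Since from either board one switches with probability $\eps^2>0$ at every step, the token cannot remain forever in the second board, so a uniformly positive fraction of steps are time-decreasing; consequently, starting from $(x_0,t_0)$ the number of time-decreasing steps before the time drops to $0$ is at most $\lceil t_0/\eps^2\rceil$, and the stopping time $\tau$ (spatial exit from $\Om$, or reaching a nonpositive time) satisfies $\E[\tau]<\infty$ uniformly in the players' strategies. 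This, together with the boundedness of the payoffs $\ol f,\ol g,u_0$, is what makes the value well defined and what will later legitimize optional stopping.

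For existence and uniqueness of a solution to \eqref{DPP} I would exploit the same time structure to reduce the problem to one time slice at a time. Fix $t$ and suppose $u^\eps(\cdot,t-\eps^2)$ is already known on all of $\R^N$ (it is, arguing by induction that descends to the data at times $\le 0$, where $u^\eps=u_0$). Then the first equation expresses $u^\eps(x,t)$ explicitly in terms of $v^\eps(x,t)$ and the known quantity $h(x,t):=\half\sup_{B_\eps(x)}u^\eps(\cdot,t-\eps^2)+\half\inf_{B_\eps(x)}u^\eps(\cdot,t-\eps^2)$. Substituting this into the second equation eliminates $u^\eps(\cdot,t)$ and leaves a single fixed-point equation for $w:=v^\eps(\cdot,t)$ on $\ol\Om$, with the exterior pinned to $\ol g$,
\[
w(x)=\frac{\eps^2}{1+\eps^2}\,h(x,t)+\frac{1}{1+\eps^2}\,\kint_{B_\eps(x)}w(y)\,dy .
\]
Since the averaging operator is nonexpansive in the sup norm, the right-hand side is a contraction with constant $\tfrac{1}{1+\eps^2}<1$, so the Banach fixed point theorem yields a unique $v^\eps(\cdot,t)$, after which $u^\eps(\cdot,t)$ is determined. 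Because for each fixed $t$ the backward chain $t,t-\eps^2,t-2\eps^2,\dots$ reaches $\{t\le 0\}$ in finitely many steps, this solves the slices by induction and produces a unique bounded solution on $\ol\Om\times(0,T]$.

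It remains to identify $(u^\eps,v^\eps)$ with the game value. I would define the two candidate values $u^\eps_\I=\sup_{S_\I}\inf_{S_\II}\E[\text{payoff}]$ and $u^\eps_\II=\inf_{S_\II}\sup_{S_\I}\E[\text{payoff}]$ (and likewise $v^\eps_\I,v^\eps_\II$ for games started in the second board), for which $u^\eps_\I\le u^\eps_\II$ holds trivially. Given the DPP solution, a first-step (one-step conditioning) analysis shows that when Player $\I$ plays a strategy choosing points that almost realize the supremum in \eqref{DPP}, up to an error $\eta 2^{-k}$ at the $k$-th step, the real-valued process obtained by evaluating $u^\eps$ or $v^\eps$ at the current game position is a bounded submartingale; the bound $\E[\tau]<\infty$ together with optional stopping then gives $\inf_{S_\II}\E[\text{payoff}]\ge u^\eps(x_0,t_0)-\eta$, whence $u^\eps_\I\ge u^\eps(x_0,t_0)$. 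The symmetric choice for Player $\II$ produces a supermartingale and yields $u^\eps_\II\le u^\eps(x_0,t_0)$. Combining these with $u^\eps_\I\le u^\eps_\II$ forces $u^\eps_\I=u^\eps_\II=u^\eps(x_0,t_0)$ and $v^\eps_\I=v^\eps_\II=v^\eps(x_0,t_0)$, so the game has a value equal to the unique solution of \eqref{DPP}.

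The step I expect to be most delicate is this martingale/optional-stopping argument. One must build the near-optimal strategies so that the accumulated errors remain summable, verify the sub/super-martingale property across all three move types, namely the tug-of-war step with time descent in board one, the random motion in board two, and the board switches, including the transitions that cross from one equation to the other, and then justify the limit $\E[M_{\tau\wedge n}]\to\E[M_\tau]$ using $\E[\tau]<\infty$ and the uniform bound on the values. The time-descent property of the first paragraph is exactly what guarantees termination and hence the validity of this passage to the limit; without it the value could fail to be attained.
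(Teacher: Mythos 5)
Your proposal is correct, and it splits cleanly into a half that mirrors the paper and a half that does not. The probabilistic identification of a DPP solution with the game value is essentially the argument of Section \ref{sect-DPP}: strategies realizing the sup (resp.\ inf) up to errors $\delta 2^{-k}$, the sub/supermartingale verification across the three transition types, the OSTh (using that $\tau$ is finite a.s.\ and the process is bounded), and the sandwich $\sup\inf\le\inf\sup$. Where you genuinely depart is existence and uniqueness for \eqref{DPP}: the paper uses Perron's method (the supremum over the family of pairs of subsolutions bounded by the data) and obtains uniqueness only a posteriori, as a byproduct of the game identification, whereas you exploit the parabolic structure directly. Since the sup/inf in the first equation of \eqref{DPP} are taken at time $t-\eps^{2}$, each backward chain $t,t-\eps^{2},t-2\eps^{2},\dots$ reaches $\{t\le 0\}$ in finitely many steps, and on each slice the elimination of $u^{\eps}(\cdot,t)$ is legitimate; your algebra is right, $(1-\eps^{4})w(x)=\eps^{2}(1-\eps^{2})h(x,t)+(1-\eps^{2})\kint_{B_{\eps}(x)}w(y)\,dy$ gives
\[
w(x)=\frac{\eps^{2}}{1+\eps^{2}}\,h(x,t)+\frac{1}{1+\eps^{2}}\,\kint_{B_{\eps}(x)}w(y)\,dy ,
\]
and the right-hand side is a $\tfrac{1}{1+\eps^{2}}$-contraction on bounded functions pinned to $\ol{g}(\cdot,t)$ outside $\Om$. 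This buys a constructive, purely deterministic proof of existence \emph{and} uniqueness of bounded DPP solutions, with no probability at all. The trade-off is that the trick is specifically parabolic: in the elliptic companion system of \cite{nosotros}, or in \eqref{ED1.22} where the infinity-Laplacian equation carries no time derivative, the sup/inf sit at the same time level as the unknown, elimination leaves only a nonexpansive map, and the contraction constant degenerates to $1$; that is precisely why the paper keeps the softer Perron argument. Two details you should make explicit if this is written up: measurability of $h$ (the sup of a bounded Borel function over the open ball $B_{\eps}(x)$ is lower semicontinuous in $x$, the inf upper semicontinuous, so the fixed-point iteration stays in a complete space of bounded Borel functions), and the termination estimate, best stated as $\E[\tau]<\infty$ uniformly in the strategies (geometric sojourns in the second board plus at most $\lceil t_{0}/\eps^{2}\rceil$ clock-decreasing steps, none of which the players can influence); the latter is in fact more quantitative than the paper's one-line a.s.-finiteness remark and is exactly what licenses the passage to the limit in optional stopping.
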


Notice that \eqref{DPP} can be seen as a sort of mean value property (at size $\eps$) for the system \eqref{ED1}. 
Let see intuitively why the DPP \eqref{DPP} holds. Playing in the first board,
at each step Player $1$ chooses the next position of the game with probability
$\frac{1-\eps^2}{2}$ and aims to obtain $\sup_{y \in B_{\eps}(x)}u^{\eps}(y,t-\eps^2)$
 (recall this player seeks to maximize the expected payoff and that time decreases by $\eps^2$ each time we play in the first board); 
 with probability
$\frac{1-\eps^2}{2}$ it is player $2$ who chooses and aims to obtain $\inf_{y \in B_{\eps}(x)}u^{\eps}(y,t-\eps^2)$, and finally
with probability $\eps^2$ the game changes boards keeping the same position and time (and therefore $v^\eps (x,t)$ comes into play). 
Playing in the second board, with probability $1-\eps^2$ the point moves at random (but stays in the second board and keeps the same time) 
and hence the term $\kint_{B_{\eps}(x)}v^{\eps}(y,t)dy $ appears, but with probability $\eps^2$ the board is changed to the first one and 
hence we have $u^\eps (x,t)$ in the second equation.

Our next goal is to look for the limit as $\eps \to 0$. Our main result in this paper is to show that,
under appropriate regularity conditions on the data, $\partial \Omega$, $f$ and $g$,
these value functions $u^\eps, v^\eps$ converge uniformly in $\overline{\Omega}$ to continuous limits $u,v$ that are characterized as being a viscosity solution to \eqref{ED1}.

\begin{theorem} \label{teo.converge2} Assume that $\Omega$ is a bounded domain satisfying a 
uniform exterior sphere condition and that the data $f$, $g$ and $u_0$ are Lipschitz continuos and that the compatibility
condition $u_0(x) = f(x,0)$ for $x\in \partial \Omega$ holds.
Let $(u^\eps, v^\eps)$ denote the values of the game.
Then, there exists a pair of continuous functions in $\overline{\Omega}\times [0,T]$, $(u,v)$, such that
\begin{equation*}
u^\eps \to u, \quad \mbox{and} \quad v^\eps \to v, \qquad \mbox{ as } \eps \to 0,
\end{equation*}
uniformly in $\overline{\Omega}\times [0,T]$.
Moreover, the limit $(u,v)$ is characterized as the unique viscosity
solution to \eqref{ED1}  (with the constant $\kappa=\frac{1}{\lvert B_{1}(0)\rvert}\int_{B_{1}(0)}z_{j}^{2}dz,
$ that depends only on the dimension).
\end{theorem}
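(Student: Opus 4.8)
The plan is to follow the by now standard route for game-theoretic approximations: obtain uniform (in $\eps$) bounds together with an equicontinuity estimate for the pair $(u^\eps,v^\eps)$, extract a uniformly convergent subsequence by a compactness argument, verify that any such limit is a viscosity solution of \eqref{ED1}, and finally invoke the comparison principle to conclude that the solution is unique, so that in fact the whole family converges. Throughout I rely on the dynamic programming principle \eqref{DPP} provided by Theorem \ref{teo.dpp2}.

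First I would establish uniform bounds. Each equation in \eqref{DPP} expresses the value at a point as a convex combination of neighbouring values and of the value on the other board (the weights $\eps^2$, $(1-\eps^2)$ and $\tfrac12,\tfrac12$ are nonnegative and add up to one). Consequently a discrete maximum principle applies to the coupled system, and the global extrema of $(u^\eps,v^\eps)$ are attained at terminal positions, giving $\|u^\eps\|_\infty,\|v^\eps\|_\infty\le \max\{\|\ol f\|_\infty,\|\ol g\|_\infty,\|u_0\|_\infty\}$ uniformly in $\eps$.

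The heart of the proof, and where the simultaneous space-time control alluded to in the introduction is needed, is equicontinuity. I would split this into a boundary (and initial-time) estimate and an interior estimate, both carried out for the two components at once because of the coupling. For the boundary estimate I would use the uniform exterior sphere condition to build a barrier: with the strategy in which one player always pulls the token toward the center of the exterior tangent ball, the token exits $\Omega$ (or crosses $t=0$) quickly, which bounds the difference between $u^\eps(x,t)$ (resp.\ $v^\eps(x,t)$) and the data at a nearby terminal point by a modulus depending on the distance to $\partial\Omega\times(0,T]\cup\Omega\times\{0\}$; the compatibility condition $u_0(x)=f(x,0)$ is what makes this estimate continuous across the parabolic corner $\partial\Omega\times\{0\}$. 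For the interior estimate I would run two copies of the game started at nearby space-time points, coupling the board-switching coin, the Tug-of-War coin, and, on the second board, the random steps, and use a translation coupling of the moves so that the two tokens stay close until one of them reaches the boundary strip, where the boundary estimate takes over. Tracking the expected difference of the two plays then yields a joint modulus of continuity in $x$ and $t$, uniform in $\eps$ up to an $O(\eps)$ error. This is the step I expect to be the main obstacle, since the coupling must be run simultaneously on both boards and must control the time variable, which decreases by $\eps^2$ at each Tug-of-War move in the first board while remaining frozen in the second.

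Granted uniform bounds and equicontinuity, an Arzel\`a--Ascoli type argument produces a subsequence with $u^\eps\to u$ and $v^\eps\to v$ uniformly on $\ol\Omega\times[0,T]$, with $(u,v)$ continuous and attaining the prescribed boundary and initial data. To identify the limit I would test against smooth functions using \eqref{DPP} and Taylor expansions. Touching $v$ by $\phi$ and expanding the average in the second equation gives $\kint_{B_{\eps}(x)}\phi\,dy-\phi(x,t)=\tfrac{\eps^2}{2}\kappa\,\Delta\phi+o(\eps^2)$ with $\kappa=\frac{1}{|B_1(0)|}\int_{B_1(0)}z_j^2\,dz$, so dividing by $\eps^2$ and letting $\eps\to0$ recovers $-\tfrac{\kappa}{2}\Delta v+v-u=0$. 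Likewise the Tug-of-War average satisfies $\tfrac12\sup_{B_{\eps}(x)}\phi+\tfrac12\inf_{B_{\eps}(x)}\phi=\phi(x)+\tfrac{\eps^2}{2}\Delta_\infty\phi+o(\eps^2)$, and combining this with the time shift $\phi(\cdot,t-\eps^2)=\phi(\cdot,t)-\eps^2\partial_t\phi+o(\eps^2)$ and dividing the first equation by $\eps^2$ yields $\partial_t u-\tfrac12\Delta_\infty u+u-v=0$ in the viscosity sense, with the usual care when $\nabla\phi=0$ following the definition in Section \ref{sect-prelim}. Finally, the comparison principle for \eqref{ED1} shows that the viscosity solution is unique; hence every subsequential limit coincides, and the entire family $(u^\eps,v^\eps)$ converges uniformly to this unique solution.
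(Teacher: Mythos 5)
Your proposal is correct and follows essentially the same route as the paper: a uniform bound on the values, boundary/initial-time estimates built from pulling strategies on the Tug-of-War board and an exterior-sphere barrier on the random-walk board, a coupling argument to propagate these estimates to interior points, an Arzel\`a--Ascoli type lemma for uniform subsequential limits, Taylor expansions in the DPP \eqref{DPP} to identify the limit as a viscosity solution of \eqref{ED1} with the stated $\kappa$, and the comparison principle to obtain uniqueness and hence convergence of the whole family. The only cosmetic difference is that the paper's pulling strategy aims at the boundary point itself rather than at the center of an exterior tangent ball, with the exterior sphere condition entering through the barrier on the second board.
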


\begin{remark}{\rm It is enough to ask for a uniform modulus of continuity of the data
$f$, $g$ and $u_0$ (keeping the compatibility
$u_0(x) = f(x,0)$ for $x\in \partial \Omega$). We prefer to state and prove
our results for Lipschitz continuos functions to slightly simplify 
some of the arguments.}
\end{remark}

\begin{remark} {\rm
If we assume that the probability of moving random in the second board is $1-K\eps^2$ (and
hence the probability of changing from the second to the first board is $K\eps^2$) with the same computations we obtain 
$$
v^{\eps}(x,t)=K \eps^{2}u^{\eps}(x,t)+(1- K \eps^{2})\kint_{B_{\eps}(x)}v^{\eps}(y,t)dy
$$
 as the second equation in the DPP (the first equation and the exterior and initial data remain unchanged). Passing to the limit we get
 $$
 -   \displaystyle \frac{\kappa}{2 K } \Delta v(x,t) + v(x,t) - u(x,t)=0 ,
 $$
 and hence, choosing $K$, we can obtain any positive constant in front of the Laplacian in \eqref{ED1}. 
}
\end{remark}

Let us comment briefly on the ideas used in the proofs. To prove that the sequence converges we will apply an Arzel\`{a}-Ascoli type lemma (see Lemma \ref{lem.ascoli.arzela} in Section
 \ref{sect-uniform}).
To this end we need to prove a uniform bound and a sort of asymptotic continuity that is based on estimates
for both components $(u^\eps, v^\eps)$ near the boundary (these estimates can be extended
to the interior via a coupling probabilistic argument). 
In fact, to see an asymptotic continuity close to a boundary point, we are able to show that both players have strategies that 
enforce the game to end near a point $y\in \partial \Omega$ with 
high probability if we start close to that point no mater the strategy choosed by the other player.  
Also we need to show that the players have strategies that force the game to end close to $(y,0)$, $y \in \Omega$, when the starting point $(x,t)$
has $x$ close to $y$ and $t>0$ but small. 
This allows us to obtain a sort of asymptotic equicontinuity close to the boundary leading to
uniform convergence in the whole $\overline{\Omega}\times [0,T]$. Note that, in general the value functions
$(u^\eps,v^\eps)$ are discontinuous inside $\Omega\times (0,T)$ (this is due to the fact that we make discrete steps)
and therefore showing uniform convergence to continuos limits is a difficult task.
Once we prove uniform convergence along subsequences of $(u^\eps, v^\eps)$ to a continuous limit $(u, v)$
we show that the limit is in fact a viscosity solution to the system \eqref{ED1} (here we use viscosity arguments 
taking into account that one equation in \eqref{ED1} is parabolic and the other one elliptic).

Our elliptic/parabolic system \eqref{ED1} has a comparison principle 
(for viscosity super and subsolutions).
Therefore, we have uniqueness of the limit and we conclude convergence of the whole
family $(u^\eps, v^\eps)$ (the Arzel\`{a}-Ascoli type lemma gives us convergence along subsequences).

Finally, we observe that similar ideas could be applied to the following system,
\begin{equation}
\label{ED1.22}
 \left\lbrace
\begin{array}{ll}
\displaystyle - \displaystyle \half \Delta_{\infty}u(x,t) + u(x,t) - v(x,t)=0 \qquad &  \ x \in \Omega,\ t>0,  \\[10pt] 
  \displaystyle  \frac{\partial v}{\partial t} (x,t) - \frac{\kappa}{2} \Delta v(x,t) + v(x,t) - u(x,t)=0  \qquad &  \ x  \in \Omega, \ t>0, \\[10pt]
u(x,t) = f(x,t) \qquad & \ x \in \partial \Omega, \ t>0, \\[10pt]
v(x,t) = g(x,t) \qquad &  \ x \in \partial \Omega, \ t>0, \\[10pt]
v(x,0) = v_0 (x) \qquad &\  x \in \Omega,
\end{array}
\right.
\end{equation}
in which the time derivative appears in the second equation. 

\medskip

The paper is organized as follows: in Section \ref{sect-prelim} we gather some preliminary results
(including the precise definition of a viscosity solution to our parabolic/elliptic system); in Section 
\ref{sect-Game} we describe in detail the game; in Section \ref{sect-DPP} we start the analysis of the game and
include a proof of the existence and uniqueness of solutions to the DPP (proving Theorem \ref{teo.dpp2}) and 
in Section \ref{sect-uniform} we show the uniform convergence of the values of the game to a continuous limit;
in Section \ref{sect-limite.viscoso} we prove that this uniform limit is in fact a viscosity solution to our system
and we include a brief sketch of the proof of the comparison principle for \eqref{ED1} that implies uniqueness
of the limit.

\section{Preliminaries.} \label{sect-prelim}

In this section we first include the precise definition of what we understand as a viscosity 
solution for the system \eqref{ED1}. In this case we must consider two different definitions, one for each equation. 
Next, we include the precise statement of the \textit{Optional Stopping Theorem} that we will need when dealing with the probabilistic 
part of our arguments. 

\subsection{Viscosity solutions} We refer to
\cite{CIL} for general results on viscosity solutions.

For the first equation of \eqref{ED1} we introduce the following definition of being a
viscosity solution to a parabolic PDE. Fix a function
\[
P:\Omega\times(0,T]\times\R\times\R\times\R^N\times\mathbb{S}^N\to\R
\]
where $\mathbb{S}^N$ denotes the set of symmetric $N\times N$ matrices.We want to consider the PDE 
\begin{equation}
\label{eqvissol}
P\Big(x,t,u (x,t),\frac{\partial u}{\partial t}(x,t), Du (x,t), D^2u (x,t) \Big) =0, \qquad x \in \Omega, \ t\in(0,T).
\end{equation}
In our system we have
\begin{equation}
\label{eqvissol.888}
P\Big(x,t,r,s,p, M \Big) = s - \displaystyle \half \langle M \frac{p}{|p|} ,\frac{p}{|p|} \rangle + r - v(x,t)=0.
\end{equation}
The idea behind Viscosity Solutions is to use the maximum principle in order to 
``pass derivatives to smooth test functions''. This idea allows us to consider operators in non divergence form.
We will assume that $P$ satisfies two monotonicity properties,
\[
X\leq Y \text{ in } \mathbb{S}^N \implies P(x,t,r,s,p,X)\geq P(x,t,r,s,p,Y)
\]
for all $(x,t,r,s,p)\in \Omega\times(0,T]\times\R\times\R\times\R^N$; and
\[
s_1\leq s_2 \text{ in } \mathbb{R} \implies P(x,t,r,s_1,p,X)\leq P(x,t,r,s_2,p,Y)
\]
for all $(x,t,r,p,X)\in \Omega\times(0,T]\times\R\times\R^N \times \mathbb{S}^N$.
Here we have an equation that involves the $\infty$-laplacian that are not defined when the gradient vanishes. 
In order to be able to handle this issue, we need to consider the lower semicontinous, $P_*$, and upper semicontinous, $P^*$, envelopes of 
$P$.
These functions are given by
\[
\begin{array}{ll}
P^*(x,t,r,s,p,X)& \displaystyle =\limsup_{(y,l,m,n,q,Y)\to (x,t,r,s,p,X)}P(y,l,m,n,q,Y),\\
P_*(x,t,r,s,p,X)& \displaystyle =\liminf_{(y,l,m,n,q,Y)\to (x,t,r,s,p,X)}P(y,l,m,n,q,Y).
\end{array}
\]
These functions coincide with $P$ at every point of continuity of $P$ and are lower and upper semicontinous respectively.
With these concepts at hand we are ready to state the definition of a viscosity solution to
\eqref{eqvissol}.

\begin{definition}
\label{def.sol.viscosa.1}
An upper semi-continuous function $u$ is a subsolution of \eqref{eqvissol} if
for every $ \phi \in C^2(\Om\times(0,T])$ such that $\phi$ touches $u$ at $ (x,t) \in
\Omega\times(0,T]$ strictly from above (that is, $u-\phi$ has a strict maximum at $(x,t)$ with $u(x,t) = \phi(x,t)$), we have
$$P_*\Big(x,t,\phi(x,t),\frac{\partial\phi}{\partial t}(x,t), D \phi(x,t),D^2\phi(x,t) \Big)\leq 0.$$

A lower semi-continuous function $ u $ is a viscosity
supersolution of \eqref{eqvissol} if for every $ \psi \in C^2(\Om\times(0,T])$ such that $ \psi $
touches $u$ at $(x,t) \in \Omega\times(0,T]$ strictly from below (that is, $u-\psi$ has a strict minimum at $(x,t)$ with $u(x,t) = \psi(x,t)$), we have
$$P^* \Big(x,t,\psi(x,t),\frac{\partial \psi}{\partial t}(x,t),D \psi(x,t),D^2\psi(x,t)\Big)\geq 0.$$

Finally, $u$ is a viscosity solution of \eqref{eqvissol} if it is both a sub- and supersolution.
\end{definition}

Now, for the second equation in \eqref{ED1} we introduce the following definition according to its elliptic nature
with $t$ as a parameter.
This time we fix a function
\[
Q:\Omega\times(0,T]\times\R\times\R^N\times\mathbb{S}^N\to\R.
\]
Associated with $Q$, we consider the PDE 
\begin{equation}
\label{eqvissol.2}
Q\Big(x,t,v (x,t), Dv (x,t), D^2v (x,t) \Big) =0, \qquad x \in \Omega, \ t\in(0,T).
\end{equation}
In our system \eqref{ED1} the second equation is given by
\begin{equation}
\label{eqvissol.2.999}
Q\Big(x,t,r, p, M \Big) = -   \displaystyle \frac{\kappa}{2} trace(M) + r - u(x,t)=0.
\end{equation}

Notice that there is no time derivative involved, but $t$ is still present in the operator $Q$. 
We will assume that $P$ satisfies a monotonicity property,
\[
X\leq Y \text{ in } \mathbb{S}^N \implies Q(x,t,r,p,X)\geq Q(x,t,r,p,Y)
\]
for all $(x,t,r,p)\in \Omega\times(0,T]\times\R\times\R^N$.
Here we have an equation that involves the Laplacian that is well defined, so there is no need to
consider upper and lower semicontinuous envelopes of $Q$. Now we have

\begin{definition}
\label{def.sol.viscosa.2}
An upper semi-continuous function $v$ is a  viscosity subsolution of \eqref{eqvissol.2} if
for every $ \phi \in C^2(\Omega)$ such that $\phi$ touches $v(\cdot,t)$ at $ x \in
\Omega$ strictly from above (that is, $u(\cdot,t)-\phi (\cdot)$ has a strict maximum at $x$ with $u(x,t) = \phi(x)$), we have
$$Q\Big(x,t,\phi(x), D \phi(x),D^2\phi(x) \Big)\leq 0.$$

A lower semi-continuous function $ u $ is a viscosity
supersolution of \eqref{eqvissol.2} if for every $ \psi \in C^2(\Omega)$ such that $ \psi $
touches $v(\cdot,t)$ at $x \in \Omega $ strictly from below (that is, 
$v(\cdot,t)-\psi(\cdot)$ has a strict minimum at $x$ with $u(x,t) = \psi(x)$), we have
$$Q \Big(x,t,\psi(x,t),D \psi(x,t),D^2\psi(x,t)\Big)\geq 0.$$

Finally, $v$ is a viscosity solution of \eqref{eqvissol} if it is both a sub- and supersolution.
\end{definition}

As we mentioned before, to deal with our system \eqref{ED1}, given a pair of continuous functions $u$, $v$, we just consider
\eqref{eqvissol.888} and \eqref{eqvissol.2.999},
\begin{equation} \label{P,Q}
\begin{array}{l}
\displaystyle P (x,t,r,s,p,X) = s- \langle X \frac{p}{|p|}, \frac{p}{|p|} \rangle +r - v(x,t) \\[10pt]
\displaystyle Q (x,t,r,p,X) = - \frac{\kappa}{2} trace (X) + r - u(x,t),
\end{array}
\end{equation}
and use them in Definitions \ref{def.sol.viscosa.1} 
and \ref{def.sol.viscosa.2}. That is, we understand that the pair $(u,v)$ is a viscosity solution
to \eqref{ED1} if $u$ is a viscosity solution to the first equation (in the sense of Definition 
 \ref{def.sol.viscosa.1} with $v(x,t)$ as a given function), and $v(x,t)$ is a solution to the second equation
 (this time in the sense of Definition \ref{def.sol.viscosa.2} with $u(x,t)$ in the right hand side). 
 
 \subsection{Probability. The Optional Stopping Theorem.}
We briefly recall (see \cite{Williams}) that a sequence of random variables
$\{M_{k}\}_{k\geq 1}$ is called a supermartingale (a submartingale) if
$$ \E[M_{k+1}\arrowvert M_{0},M_{1},...,M_{k}]\leq M_{k} \ \ (\geq)$$
Then, the Optional Stopping Theorem, that we will call {\it (OSTh)} in what follows, says:
given $\tau$ a stopping time such that one of the following conditions hold,
\begin{itemize}
\item[(a)] The stopping time $\tau$ is bounded a.s.;
\item[(b)] It holds that $\E[\tau]<\infty$ and there exists a constant $c>0$ such that $$\E[M_{k+1}-M_{k}\arrowvert M_{0},...,M_{k}]\leq c;$$
\item[(c)] There exists a constant $C>0$ such that $|M_{\min \{\tau,k\}}|\leq C$ a.s. for every $k$.
\end{itemize}
Then 
$$ \E[M_{\tau}]\leq \E [M_{0}] \ \ (\geq)$$
if $\{M_{k}\}_{k\geq 0}$ is a supermartingale (submartingale).
For the proof of this result we refer to \cite{Doob,Williams}.

\section{Description of the game}
\label{sect-Game}

The rules of the game are the following:
the game starts with a token at an initial position $(x_0,t_0) \in \Omega\times (0,T]$, in one of the two boards. 
In the fist board, with probability $1-\eps^2$, 
the players play Tug-of-War as described in \cite{PSSW,MPR} (this game is associated
with the infinity Laplacian). Playing the Tug-of-War game, the players toss a fair coin and the winner
chooses a new position of the game with the restriction that $x_1 \in B_\eps (x_0)$.
Then the new position of the game goes to the point $(x_1, t_0 -\eps^2)$, this means that, if we play in the first board, the next position
of the game must be at the time level $t_1=t_0-\eps^2$ (or $t_1=0$ if $t_0<\eps^2$). We decrease time by $\eps^2$ only if we play
in the first board.
On the other hand, with probability $\eps^2$ the token jumps to the second board (at the same position $(x_0,t_0)$
(without changing the time in this case). Playing in the second board,
with, probability $1-\eps^2$ the token is moved at random (uniform probability) to some point 
$(x_1,t_0) \in B_\eps (x_0)\times (0,T]$ (we keep the time unchanged in the second board) 
and with probability $\eps^2$ the token jumps back to the first board
(without changing the spacial position nor the time). Notice that in the second board we stay at every play at the same time
(we only change the space variable $x$, but the time variable $t$ remains the same).
The game continues until the position of the token leaves the spacial domain or the time variable becomes less or equal than 0. 
If we are in the first situation (we leave $\Omega$ at some positive time) and the last position of the game is $(x_\tau,t_\tau)$ 
(with $x_\tau\in\R^N \setminus\Om$ and $t_\tau >0$) in the first board, then Player 1 gets the final payoff $\ol{f}(x_\tau,t_\tau)$ and Player 2 gets $- \ol{f}(x_\tau,t_\tau)$
(notice that this is a zero sum game). But if the game stops due to the time becoming less than 0 (remark that this can only happens 
playing in the first board), the first player gets as final
payoff $u_0(x_\tau)$ when $x_\tau\in\Om$, or $\ol{f}(x_\tau,0)$ if $x_\tau\notin\Om$
and the second player gets minus this amount. 
Now, if we finish the game playing in the second board and $(x_\tau,t_\tau)$ is the last position (notice that $t_\tau$ must be
positive in this case), then Player 1 gets $\ol{g}(x_\tau,t_\tau)$ and Player 2  $- \ol{g}(x_\tau,t_\tau)$. We can think
this final payoff as Player 2 pays to Player 1 the amount given by the payoff functions according to the board in which the game ends and/or to the time when the game ends.
We have that the game generates a sequence of states
$$
P=\Big\{ (x_{0},t_0, j_{0}),(x_{1},t_1,j_{1}),...,(x_{\tau},t_\tau, j_{\tau}) \Big\}
$$
with $j_{i}\in\{1,2\}$ (this index gives the board in which we are playing) and $(x_{i},t_i)$ gives the position (both in space and time) 
in the board $j_{i}$. 
The dependence of the position of the token 
in one of the boards, $j_i$, will be made explicit only when 
needed. Also remark that the number of plays until the game ends, $\tau$, is finite almost surely
(that is, the game ends with probability one in at most a finite number of plays). 

A strategy $S_\I$ for Player~I is a function defined on the
partial histories that gives the next position of the game provided Player $\I$ wins the coin toss
(and the token is and stays in the first board)
\[
S_\I{\left((x_0,t_0,j_{0}),(x_1,t_1,j_{1}),\ldots,(x_n,t_n,1)\right)}= (x_{n+1},t_{n+1},1) 
\]
\[
\mbox{with } x_{n+1} \in B_\eps (x_n) \quad \mbox{and } t_{n+1}=t_n -\eps^2 >0 \quad \mbox{or } t_{n+1}=0 \quad \mbox{if } t_n -\eps^2 \leq 0
\]
Analogously, a strategy $S_\II$ for Player~II is a function defined on the
partial histories that gives the next position of the game provided Player $\II$ is who wins the coin toss
(and the token stays at the first board).

When the two players fix their strategies $S_I$ and $S_{II}$ we can compute the expected outcome as follows:
Given the sequence $(x_0,t_0),\ldots,(x_n,t_n)$ with $x_k\in\Om$ and $t_n>0$, if $(x_k,t_k)$ belongs to the first board, the next game position is distributed according to
the probability
\[
\begin{array}{l}
\displaystyle 
\pi_{S_\I,S_\II,1}((x_0,t_0,j_0),\ldots,(x_k,t_k,1),{A}, B)= \frac{1-\eps^2}{2} \delta_{S_\I ((x_0,t_0,j_0),\ldots,(x_k,t_k,1))} (A) +
\frac{1-\eps^2}{2} \delta_{S_\II ((x_0,t_0,j_0),\ldots,(x_k,t_k,1))} (A) \\[10pt]
\qquad \qquad\qquad \qquad\qquad \qquad\qquad \qquad \qquad \displaystyle
+ \eps^2 \delta_{(x_k,t_k)} (B).
\end{array}
\]
Here $A$ is a subset in the first board while $B$ is a subset in the second board. 
If 
$x_k$ belongs to the second board, the next game position is distributed according to
the probability
\[
\pi_{S_\I,S_\II,2}((x_0,t_0,j_0),\ldots,(x_k,t_k,2),{A}, B)= (1-\eps^2) U(B_\eps(x_k)) (B) +
\eps^2 \delta_{(x_k,t_k)} (A).
\]

By using the Kolmogorov's extension theorem and the one step transition probabilities, we can build a
probability measure $\mathbb{P}^{x_0}_{S_\I,S_\II}$ on the
game sequences (taking onto account the two boards). The expected payoff, when starting from $(x_0,t_0,j_0)$ and
using the strategies $S_\I,S_\II$, is
\begin{equation}
\label{eq:defi-expectation}
\mathbb{E}_{S_{\I},S_{\II}}^{(x_0,t_0,j_0)} [ h (x_\tau,t_\tau) ]=\int_{H^\infty} h (x_\tau,t_\tau)  \,  d
\mathbb{P}^{(x_0,t_0)}_{S_\I,S_\II}
\end{equation}
(here we use $h=f$ if $(x_\tau,t_\tau)$ is in the first board and $t_\tau >0$, $h=g$ if $(x_\tau,t_\tau)$ is in the second board and $t_\tau >0$, or finally $h=u_0 $ if $t_\tau \leq 0$ ).

The \emph{value of the game for Player I} is given by
\[
u^\eps_\I(x_0,t_0)=\inf_{S_{\I}} \sup_{S_{\II}}\,
\mathbb{E}_{S_{\I},S_{\II}}^{(x_0,t_0,1)}\left[h (x_\tau,t_\tau) \right]
\]
for $(x_0,t_0)\in\Omega\times (0,T]$ in the first board ($j_0 =1$), and by 
\[
v^\eps_\I(x_0,t_0)=\inf_{S_\I}\sup_{S_{\II}}\,
\mathbb{E}_{S_{\I},S_\II}^{(x_0,t_0,2)}\left[h (x_\tau,t_\tau) \right]
\]
for $(x_0,t_0)\in\Omega\times (0,T]$ in the second board ($j_0 =2$).

The \emph{value of the game for Player II} is given by the same formulas just reversing the $\inf$--$\sup$,
\[
u^\eps_\II(x_0,t_0)=\sup_{S_{\II}}\inf_{S_\I}\,
\mathbb{E}_{S_{\I},S_\II}^{(x_0,t_0,1)}\left[h (x_\tau,t_\tau) \right], 
\]
for $x_0$ in the first board and 
\[
v^\eps_\II(x_0,t_0)=\sup_{S_{\II}}\inf_{S_\I}\,
\mathbb{E}_{S_{\I},S_\II}^{(x_0,t_0,2)}\left[h (x_\tau,t_\tau) \right], 
\]
for $x_0$ in the second board.

Intuitively, the values $u_\I(x_0,t_0)$ and $u_\II(x_0,t_0)$ are the best
expected outcomes each player can guarantee when the game starts at
$(x_0,t_0)$ in the first board while $v_\I(x_0,t_0)$ and $v_\II(x_0,t_0)$ are the best
expected outcomes for each player in the second board.
If these values coincide $u^\eps_\I= u^\eps_\II$ and $v^\eps_\I= v^\eps_\II$, we say that the game has a value.

Before proving that the game has a value,
let us observe that the game ends almost surely no matter the strategies used by the players, that is $\P (\tau =+\infty) =0$, and therefore
the expectation \eqref{eq:defi-expectation} is well defined. This is because we cannot play infinitely in the same board, and playing in 
the first board we will leave $\Om \times [0,t)$ either exiting $\Omega$ or exhausting time (in a finite number of plays).

\section{Existence and uniqueness for the DPP}
\label{sect-DPP}

To see that the game has a value, we first observe that we have existence
of $(u^\eps, v^\eps)$, a pair of functions that satisfies the DPP.
The existence of such a pair can be obtained by Perron's method. In fact, let us start considering the following set (that is composed by pairs of
functions 
that are sub solutions to our DPP). Let
\begin{equation} \label{kkk}
C=\max \Big\{ \|\ol{f}\|_\infty , \|\ol{g}\|_\infty , \| u_0 \|_\infty
\Big\},
\end{equation}
and take 
\begin{equation}
\label{A}
{A} =\displaystyle \Big\{ (z^{\eps},w^{\eps}) : \mbox{ are bounded above by $C$ and verify (\textbf{e}) } \Big\},
\end{equation}
with
\begin{equation}
\label{e} \tag{\bf{e}}
\displaystyle \left\lbrace
\begin{array}{ll}
 \displaystyle z^{\eps}(x,t)\leq\eps^{2}w^{\eps}(x,t)+(1-\eps^{2})\Big\{\half \sup_{y \in B_{\eps}(x)}z^{\eps}(y,t-\eps^2) + \half \inf_{y \in B_{\eps}(x)}z^{\eps}(y,t-\eps^2)\Big\} &  \ x \in \Omega,t\in(0,T], \\[10pt]
   \displaystyle w^{\eps}(x,t)\leq\eps^{2}z^{\eps}(x,t)+(1-\eps^{2})\kint_{B_{\eps}(x)}w^{\eps}(y,t)dy  &  \ x \in \Omega, t\in(0,T],  \\[10pt]
z^{\eps}(x,t) \leq \ol{f}(x,t)  & \ x \in \R^{N} \backslash \Omega, t > 0,  \\[10pt]
w^{\eps}(x,t) \leq \ol{g}(x,t)  & \ x \in \R^{N} \backslash \Omega, t > 0, \\[10pt] 
z^{\eps}(x,t)\leq u_0(x)  & \ x \in \Om , t\leq 0.
\end{array}
\right.
\end{equation}

Observe that 
 ${A} \neq \emptyset$. To see this fact, we just take 
 $z^{\eps}=-C$ and $ w^{\eps}=-C$
 with $C$ given by \eqref{kkk}.
Now we let
 \begin{equation}
 \label{u}
 u^{\eps}(x,t)=\sup_{(z^{\eps},w^{\eps})\in {A}}z^{\eps}(x,t) 
 \qquad \mbox{and} \qquad
   v^{\eps}(x,t)=\sup_{(z^{\eps},w^{\eps})\in {A}}w^{\eps}(x,t) .
   \end{equation}
Our goal is to show that in this way we find a solution to the DPP.

\begin{proposition} \label{prop.DPP.tiene.sol}
The pair $(u^{\eps},v^{\eps})$ given by \eqref{u} is a solution to the DPP \eqref{DPP}.
\end{proposition}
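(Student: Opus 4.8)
The plan is to argue by Perron's method, following the construction \eqref{u}. Two things must be shown: that the pair $(u^\eps,v^\eps)$ belongs to $A$, so that the five inequalities of \eqref{e} hold for it, and that the two interior inequalities, as well as the exterior and initial ones, are in fact equalities, which is exactly \eqref{DPP}. For membership in $A$ I would use that a pointwise supremum of subsolutions is a subsolution. Fix $(z^\eps,w^\eps)\in A$; since $z^\eps\le u^\eps$ and $w^\eps\le v^\eps$ pointwise and since $\sup_{B_\eps(x)}(\cdot)$, $\inf_{B_\eps(x)}(\cdot)$ and the average $\kint_{B_{\eps}(x)}(\cdot)\,dy$ are monotone in their integrand, replacing $(z^\eps,w^\eps)$ by $(u^\eps,v^\eps)$ only enlarges the right-hand sides of the first two lines of \eqref{e}. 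Hence each $z^\eps$ (resp.\ $w^\eps$) is dominated by the right-hand side built from $(u^\eps,v^\eps)$; taking the supremum over $A$ on the left yields those two inequalities for $(u^\eps,v^\eps)$. The exterior and initial inequalities and the bound by $C$ from \eqref{kkk} pass to the supremum directly, so $(u^\eps,v^\eps)\in A$.

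Next I would promote the two interior inequalities to equalities by a one-point ``bump''. Suppose the first inequality were strict at some $(x_0,t_0)\in\Omega\times(0,T]$, and call $R$ its right-hand side there, so $u^\eps(x_0,t_0)<R$; note $R\le C$ because $u^\eps,v^\eps\le C$. Let $\tilde u$ agree with $u^\eps$ except that $\tilde u(x_0,t_0)=R$. Then $(\tilde u,v^\eps)\in A$: at $(x_0,t_0)$ the first inequality now holds with equality, since $R$ depends only on $v^\eps(x_0,t_0)$ and on $\tilde u(\cdot,t_0-\eps^2)=u^\eps(\cdot,t_0-\eps^2)$, none of which were altered; at any point $(x,t_0+\eps^2)$ with $x_0\in B_\eps(x)$ raising the value at $x_0$ only increases $\sup_{B_\eps(x)}\tilde u(\cdot,t_0)$ and $\inf_{B_\eps(x)}\tilde u(\cdot,t_0)$, so the right-hand side grows while the left-hand side is unchanged; and the second inequality is helped through its term $\eps^2\tilde u(x,t)$. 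But then $u^\eps(x_0,t_0)=\sup_A z^\eps(x_0,t_0)\ge\tilde u(x_0,t_0)=R$, a contradiction. The same argument applied to the second component upgrades the second inequality; here the key observation is that modifying $w^\eps$ at the single point $(x_0,t_0)$ leaves every average $\kint_{B_{\eps}(x)}w^\eps(y,t_0)\,dy$ unchanged (one point has measure zero), so only the coupling term $\eps^2\tilde v(x,t)$ in the first line is affected, and favourably.

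Finally, for the exterior and initial data I would exhibit a single member of $A$ attaining them. Let $(z^*,w^*)$ equal $\ol f$ and $\ol g$ on $\{\R^N\setminus\Omega\}\times(0,T]$, let $z^*$ equal $u_0$ on $\Omega\times\{t\le 0\}$, and let both equal the constant $-C$ on $\Omega\times(0,T]$. All values taken by $z^*,w^*$ are $\ge -C$, so the right-hand sides of the two interior lines of \eqref{e}, which are weighted averages of such values, are themselves $\ge -C$; thus both interior inequalities hold, while the remaining lines hold with equality. Hence $(z^*,w^*)\in A$, and therefore $u^\eps\ge z^*=\ol f$ on the exterior, $u^\eps\ge u_0$ for $t\le 0$, and $v^\eps\ge w^*=\ol g$ on the exterior; combined with the reverse inequalities already obtained from $(u^\eps,v^\eps)\in A$, these become equalities. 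I expect the only delicate point to be the monotonicity bookkeeping in the bump step: because the two equations are coupled through the $\eps^2v^\eps$ and $\eps^2u^\eps$ terms and the $v$-equation averages over a ball that contains its base point, one must check that a one-point change in one component never breaks an inequality of the other component or at a neighbouring point.
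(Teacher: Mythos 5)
Your proof is correct and follows essentially the same Perron-type argument as the paper: first show the supremum pair $(u^\eps,v^\eps)$ belongs to $A$ by monotonicity of the sup/inf/average operators, then upgrade each inequality to an equality via a one-point modification whose membership in $A$ contradicts maximality (including the same key observation that a one-point change does not alter the ball averages). The only minor variations are that you bump all the way up to the right-hand side value $R$ where the paper bumps by $\delta/2$, and that you treat the exterior/initial equalities by exhibiting the explicit competitor $(z^*,w^*)$ rather than by the paper's contradiction bump; both variants are sound.
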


\begin{proof}
First, let us see that $(u^{\eps},v^{\eps})$ belongs to the set ${A}$. To this end we first observe that 
$u^{\eps}$ y $v^{\eps}$ are bounded by $C$ and verify 
 $u^{\eps}(x,t)\leq \ol{f}(x,t)$ , $v^{\eps}(x,t)\leq \ol{g}(x,t)$ for $x\in\R^{N}\backslash\Om$, $t\geq 0$, and $z^{\eps}(x,t)\leq u_0(x)$ for $x\in\Om$, $t\leq 0$. 
 Hence we need to check \eqref{e} for $x\in\Om$ , $t>0$. Take $(z^{\eps},w^{\eps})\in {A}$ and fix $(x,t)\in\Om\times(0,T]$,. Then,
$$z^{\eps}(x,t)\leq\eps^{2}w^{\eps}(x,t)+(1-\eps^{2})\Big\{\half \sup_{y \in B_{\eps}(x)}z^{\eps}(y,t-\eps^2) + \half \inf_{y \in B_{\eps}(x)}z^{\eps}(y,t-\eps^2)\Big\}.$$
As $z^{\eps}\leq u^{\eps}$ and $w^{\eps}\leq v^{\eps}$ we obtain 
$$z^{\eps}(x,t)\leq\eps^{2}v^{\eps}(x,t)+(1-\eps^{2})\Big\{\half \sup_{y \in B_{\eps}(x)}u^{\eps}(y,t-\eps^2) + \half \inf_{y \in B_{\eps}(x)}u^{\eps}(y,t-\eps^2)\Big\}.$$
Taking supremum in the left hand sise we obtain
$$u^{\eps}(x,t)\leq\eps^{2}v^{\eps}(x,t)+(1-\eps^{2})\Big\{\half \sup_{y \in B_{\eps}(x)}u^{\eps}(y,t-\eps^2) + \half \inf_{y \in B_{\eps}(x)}u^{\eps}(y,t-\eps^2)\Big\}$$
In an analogous way we obtain  
$$ v^{\eps}(x,t)\leq\eps^{2}u^{\eps}(x,t)+(1-\eps^{2})\kint_{B_{\eps}(x)}v^{\eps}(y,t)dy,$$
and we conclude that $(u^{\eps},v^{\eps}) \in {A}$.

To end the proof we need to see that $(u^{\eps},v^{\eps})$ verifies the equalities in the equations in condition \eqref{e}. 
We argue by contradiction and assume that there is a point $(x_{0},t_0)\in\R^{N}\times[0,T]$ where an inequality in \eqref{e} is strict.
First, assume that $(x_{0},t_0)\in\R^{N}\backslash\Om\times[0,T]$ and that we have $u^{\eps}(x_{0},t_0)<\ol{f}(x_{0},t_0)$, or the case $x_0\in\Om$, $t_0\leq 0$. and $z^{\eps}(x_0,t_0)<u_0(x_0)$.
Then, take $u^{\eps}_{0}$ defined by $u^{\eps}_{0}(x,t)=u^{\eps}(x,t)$ for $(x,t)\neq (x_0,t_0)$ and $u^{\eps}_{0}(x_{0},t_0)=\ol{f}(x_{0},t_0)$ in the first case, and $u^{\eps}_0(x_0,t_0)=u_0(x_0)$ in the second case.
The pair $(u^{\eps}_{0},v^{\eps})$ belongs to ${A}$ but $u^{\eps}_{0}(x_{0})>u^{\eps}(x_{0})$
which is a contradiction. We can argue is a similar way if $v^{\eps}(x_{0},t_0)<\ol{g}(x_{0},t_0)$. 
Next, we consider a point $(x_{0},t_0)\in\Om\times(0,T]$ with one of the inequalities in \textbf{e} strict. Assume that 
$$u^{\eps}(x_{0},t_0)<\eps^{2}v^{\eps}(x_{0},t_0)+(1-\eps^{2})\Big\{\half \sup_{y \in B_{\eps}(x_{0})}u^{\eps}(y,t_0-\eps^2) + \half \inf_{y \in B_{\eps}(x_{0})}u^{\eps}(y,t_0-\eps^2)\Big\}.$$
Let
$$ \delta=\eps^{2}v^{\eps}(x_{0},t_0)+(1-\eps^{2})\Big\{\half \sup_{y \in B_{\eps}(x_{0})}u^{\eps}(y,t_0-\eps^2) + \half \inf_{y \in B_{\eps}(x_{0})}u^{\eps}(y,t_0-\eps^2)\Big\}-u^{\eps}(x_{0},t_0)>0, $$
and consider the function $u^{\eps}_{0}$ given by;
$$
 u^{\eps}_{0} (x,t) =\left\lbrace
 \begin{array}{ll}
 u^{\eps}(x,t) &  \ \ (x,t) \neq (x_{0},t_0),  \\[5pt]
 \displaystyle u^{\eps}(x,t)+\frac{\delta}{2} &  \ \ (x,t) =(x_{0},t_0) . \\
 \end{array}
 \right.
$$
Observe that
$$u^{\eps}_{0}(x_{0},t_0)=u^{\eps}(x_{0},t_0)+\frac{\delta}{2}<\eps^{2}v^{\eps}(x_{0},t_0)+(1-\eps^{2})\Big\{\half \sup_{y \in B_{\eps}(x_{0})}u^{\eps}(y,t_0-\eps^2) + \half \inf_{y \in B_{\eps}(x_{0})}u^{\eps}(y,t_0-\eps^2)\Big\}$$
and hence
$$u^{\eps}_{0}(x_{0},t_0)<\eps^{2}v^{\eps}(x_{0},t_0)+(1-\eps^{2})\Big\{\half \sup_{y \in B_{\eps}(x_{0})}u^{\eps}_{0}(y,t_0-\eps^2) + \half \inf_{y \in B_{\eps}(x_{0})}u^{\eps}_{0}(y,t_0-\eps^2)\Big\}.$$
Then we have that $(u^{\eps}_{0},v^{\eps})\in {A}$ but $u^{\eps}_{0}(x_{0}t_0)>u^{\eps}(x_{0},t_0)$ reaching again a contradiction.

In an analogous way we can show that when 
$$v^{\eps}(x_{0},t_0)<\eps^{2}u^{\eps}(x_{0},t_0)+(1-\eps^{2})\kint_{B_{\eps}(x_{0})}v^{\eps}(y,t_0)dy,$$
we also reach a contradiction.
\end{proof}

Now, concerning the value functions of our game, we know that $u^\eps_\I\geq u^\eps_\II$ 
and $v^\eps_\I\geq v^\eps_\II$
(this is immediate from the definitions). Hence, 
to obtain uniqueness of solutions of the DPP and existence of value functions for our game, 
it is enough to show that $u^\eps_\II \geq u^\eps\geq u^\eps_\I$ and 
$v^\eps_\II \geq v^\eps\geq v^\eps_\I$. To show this result
we will use the \textit{OSTh} for sub/supermartingales
(see Section \ref{sect-prelim}).

\begin{theorem}
Gigen $\eps>0$ let $(u^{\eps},v^{\eps})$ a pair of functions that verifies the DPP \eqref{DPP}, then it holds that
$$
u^{\eps}(x_{0},t_0)=\sup_{S_{I}}\inf_{S_{II}}\E^{(x_{0},t_0,1)}_{S_{I},S_{II}}[h( x_{\tau},t_\tau)] 
$$
if $(x_{0},t_0) \in \Om\times (0,T)$ is in the first board and
$$
v^{\eps}(x_{0},t_0)=\sup_{S_{I}}\inf_{S_{II}}\E^{(x_{0},t_0,2)}_{S_{I},S_{II}}[h( x_{\tau},t_\tau)]
$$
 if $(x_{0},t_0) \in \Om\times (0,T)$ is in the second board. 
 
 Moreover, we can interchange $\inf$ with $\sup$ in the previous identities, that is, the game has a value. 
 This value can be characterized as the unique solution to the DPP.  
\end{theorem}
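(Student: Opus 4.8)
The plan is to fix an arbitrary solution $(u^\eps,v^\eps)$ of the DPP \eqref{DPP} and squeeze it between the two iterated game values by a martingale argument based on the \textit{OSTh}. The key device is a single process that follows $u^\eps$ while we play in the first board and $v^\eps$ while we play in the second: along a game sequence $\{(x_k,t_k,j_k)\}$ set
$$M_k=\left\{\begin{array}{ll} u^\eps(x_k,t_k), & j_k=1,\\[4pt] v^\eps(x_k,t_k), & j_k=2.\end{array}\right.$$
The two lines of \eqref{DPP} are exactly what controls $\E[M_{k+1}\mid\mathcal F_k]$. In the first board the board-change contributes $\eps^2 v^\eps(x_k,t_k)$ and the Tug-of-War step contributes $\tfrac{1-\eps^2}{2}(\sup+\inf)$ of $u^\eps(\cdot,t_k-\eps^2)$ over $B_\eps(x_k)$, so the right-hand side of the first DPP equation appears; in the second board the random move produces $(1-\eps^2)\kint_{B_\eps(x_k)}v^\eps(\cdot,t_k)$ and the board-change contributes $\eps^2 u^\eps(x_k,t_k)$, reproducing the second DPP equation \emph{exactly} (which is why the second board yields an equality, with no error term).

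First I would prove the lower bound $\sup_{S_\I}\inf_{S_\II}\E[h]\geq u^\eps(x_0,t_0)$. Fix $\eta>0$ and let Player~$\I$ play the strategy $S_\I^0$ that, at the $k$-th move in the first board, picks $y\in B_\eps(x_k)$ with $u^\eps(y,t_k-\eps^2)\geq\sup_{B_\eps(x_k)}u^\eps(\cdot,t_k-\eps^2)-\eta 2^{-k}$. Against any $S_\II$, the computation above gives $\E[M_{k+1}\mid\mathcal F_k]\geq M_k-\eta 2^{-k}$ in the first board and $=M_k$ in the second, so $M_k+\sum_{i<k}\eta 2^{-i}$ is a submartingale. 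Since $u^\eps,v^\eps$ are bounded by $C$ and $\tau<\infty$ almost surely, condition (c) of the \textit{OSTh} applies and yields $\E[M_\tau]\geq u^\eps(x_0,t_0)-2\eta$. At the stopping time the boundary and initial lines of \eqref{DPP} identify $M_\tau$ with the payoff $h(x_\tau,t_\tau)$ in every terminating case (board~$1$ with $t_\tau>0$ gives $\ol f$, board~$2$ gives $\ol g$, and $t_\tau\leq 0$ gives $u_0$), so $\inf_{S_\II}\E_{S_\I^0,S_\II}[h]\geq u^\eps(x_0,t_0)-2\eta$; letting $\eta\to0$ and taking $\sup_{S_\I}$ gives the bound. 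The upper bound $\inf_{S_\II}\sup_{S_\I}\E[h]\leq u^\eps(x_0,t_0)$ is the mirror image: Player~$\II$ plays a near-infimum strategy, $M_k-\sum_{i<k}\eta 2^{-i}$ becomes a supermartingale, and the \textit{OSTh} gives $\E[M_\tau]\leq u^\eps(x_0,t_0)+2\eta$.

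Combining the two bounds with the elementary minimax inequality $\sup_{S_\I}\inf_{S_\II}\E[h]\leq\inf_{S_\II}\sup_{S_\I}\E[h]$ forces
$$u^\eps(x_0,t_0)\leq\sup_{S_\I}\inf_{S_\II}\E[h]\leq\inf_{S_\II}\sup_{S_\I}\E[h]\leq u^\eps(x_0,t_0),$$
so all three coincide; this is exactly the claimed identity for $u^\eps$ together with the interchange of $\inf$ and $\sup$. Running the same argument with the token started in the second board (so $M_0=v^\eps(x_0,t_0)$) gives the identity for $v^\eps$. Uniqueness then comes for free: the value functions of the game do not depend on which solution of the DPP we used to build $M_k$, so any two solutions of \eqref{DPP} must both equal the common game value and therefore coincide.

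The main obstacle I anticipate is organizing the single process $M_k$ so that it correctly switches between $u^\eps$ and $v^\eps$ at board changes while simultaneously absorbing the summable errors $\eta 2^{-k}$, which arise only from the first-board $\sup/\inf$ approximations. One must index the error scheme by the total play count $k$ (not by the number of first-board moves), so that it stays summable even though arbitrarily many steps may be spent in the second board between two successive first-board moves, and one must confirm $\tau<\infty$ almost surely even though time decreases only on the first board. Verifying the conditional-expectation identities separately in each board, and checking that $M_\tau$ equals the correct terminal payoff in all three termination scenarios, are the remaining points that require care.
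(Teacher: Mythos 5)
Your proposal is correct and follows essentially the same route as the paper: near-optimal strategies with geometrically summable errors $\eta 2^{-k}$, the board-switching process that tracks $u^\eps$ or $v^\eps$, the sub/supermartingale property verified separately in each board, the Optional Stopping Theorem (justified by $\tau<\infty$ a.s. and uniform boundedness), and the elementary inequality $\sup_{S_\I}\inf_{S_\II}\leq\inf_{S_\II}\sup_{S_\I}$ to close the sandwich. The only cosmetic difference is bookkeeping: the paper absorbs the error into the process itself, setting $M_k=u^\eps(x_k,t_k)-\delta/2^k$ (resp. $v^\eps(x_k,t_k)-\delta/2^k$), while you add the accumulated correction $\sum_{i<k}\eta 2^{-i}$ externally; these are equivalent.
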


\begin{proof}
Given $\eps>0$ we have proved the existence of a solution to the DPP $(u^{\eps},v^{\eps})$. Fix $\delta>0$. 
Assume that we start with $(x_{0},t_0,1)$, that is, the initial position is at board 1. We choose a strategy for 
Player I as follows:
$$
(x_{k+1}^{I},t_{k}^{I})=S_{I}^{*}((x_{0},t_0),...,(x_{k},t_k)) 
\qquad 
\mbox{is such that} \qquad
\sup_{y\in B_{\eps}(x_{k})}u^{\eps}(y,t-\eps^2) - \frac{\delta}{2^{k}}\leq u^{\eps}(x_{k+1}^{I},t_{k+1}^{I}).
$$
Given this strategy for Player I and any strategy $S_{II}$ for Player II we consider the sequence of random variables
given by
$$
M_{k}=\left\lbrace
 \begin{array}{ll}
 \displaystyle u^{\eps}(x_{k},t_k)-\frac{\delta}{2^{k}} & \ \ \mbox{if } \ (j_{k}=1),  \\[10pt]
 \displaystyle v^{\eps}(x_{k},t_k)-\frac{\delta}{2^{k}} & \ \ \mbox{if } \ (j_{k} = 2).
 \end{array}
 \right.
$$
Let us see that $(M_{k})_{\kappa\geq 0}$ is a submartingale. 
To this end we need to estimate 
$$
\E^{(x_{0},t_0,1)}_{S_{I}^{*},S_{II}}[M_{k+1}\arrowvert M_{k}]=\E^{(x_{0},t_0,1)}_{S_{I}^{*},S_{II}}[M_{k+1}\arrowvert (x_{k},t_k,j_{k})].
$$
We consider two cases:

\textbf{Case 1:} Assume that $j_{k}=1$, then 
$$
\begin{array}{ll}
\displaystyle 
\E^{(x_{0},t_0,1)}_{S_{I}^{*},S_{II}}[M_{k+1}\arrowvert (x_{k},t_k,1)] \\[10pt]
\qquad \displaystyle = (1-\eps^{2})\E^{(x_{0},t_0,1)}_{S_{I}^{*},S_{II}}[M_{k+1}\arrowvert (x_{k},t_k,1)\wedge j_{k+1}=1]+\eps^{2}\E^{(x_{0},t_0,1)}_{S_{I}^{*},S_{II}}[M_{k+1}\arrowvert (x_{k},t_k,1)\wedge j_{k+1}=2].
\end{array}
$$

Her we used that the probability of staying in the same board is $(1-\eps^{2})$ and the probability of jumping to the other board is
$\eps^{2}$. Now, if $j_{k}=1$ and $j_{k+1}=2$ then $x_{k+1}=x_{k}$ and $t_{k+1}=t_k$ (we just changed boards). 
On the other hand, if we stay in the first board we obtain
$$
\E^{(x_{0},t_0,1)}_{S_{I}^{*},S_{II}}[M_{k+1}\arrowvert (x_{k},t_k,1)]=(1-\eps^{2})\Big\{\half u^{\eps}(x_{k+1}^{I},t_{k+1})+\half u^{\eps}(x_{k+1}^{II},t_{k+1})-\frac{\delta}{2^{k+1}}\Big\}+\eps^{2}(v^{\eps}(x_{k},t_k)-\frac{\delta}{2^{k+1}}).
$$
Since we are using the strategies $S_{I}^{*}$ and $S_{II}$, it holds that 
$$
\sup_{y\in B_{\eps}(x_{k})}u^{\eps}(y,t_k -\eps^2) - \frac{\delta}{2^{k}}\leq u^{\eps}(x_{k+1}^{I},t_{k+1})
\qquad
\mbox{and}\qquad
\inf_{y\in B_{\eps}(x_{k})}u^{\eps}(y,t-\eps^2)\leq u^{\eps}(x_{k+1}^{II},t_{k+1}).
$$
Therefore, we arrive to
$$
\begin{array}{l}
\displaystyle 
\E^{(x_{0},1)}_{S_{I}^{*},S_{II}}[M_{k+1}\arrowvert (x_{k},t_k,1)]
\\[10pt]
\qquad \displaystyle 
\geq(1-\eps^{2})\Big\{\half \sup_{y\in B_{\eps}(x_{k})}u^{\eps}(y,t_{k}-\eps^2 - \frac{\delta}{2^{k}})+\half \inf_{y\in B_{\eps}(x_{k})}u^{\eps}(y,t_{k}-\eps^2)\Big\} +\eps^{2}v^{\eps}(x_{k},t_k)-\frac{\delta}{2^{k+1}},
\end{array}
$$
that is,
$$
\begin{array}{l}
\displaystyle 
\E^{(x_{0},t_0,1)}_{S_{I}^{*},S_{II}}[M_{k+1}\arrowvert (x_{k},t_k,1)] 
 \displaystyle \\[10pt]
\qquad  \displaystyle \geq(1-\eps^{2})\Big\{\half \sup_{y\in B_{\eps}(x_{k})}u^{\eps}(y,t-\eps^2)+\half \inf_{y\in B_{\eps}(x_{k})}u^{\eps}(y,t-\eps^2)\Big\}+\eps^{2}v^{\eps}(x_{k},t_k)-(1-\eps^{2}) \frac{\delta}{2^{k+1}}-\frac{\delta}{2^{k+1}}.
\end{array}
$$
As $u^{\eps}$ is a solution to the DPP \eqref{DPP} we obtain 
$$
\E^{(x_{0},t_0,1)}_{S_{I}^{*},S_{II}}[M_{k+1}\arrowvert (x_{k},t_k,1)]\geq u^{\eps}(x_{k},t_k)-\frac{\delta}{2^{k}}=M_{k}
$$
as we wanted to show.

\textbf{Case 2:} Assume that $j_{k}=2$. With the same ideas used before we get
$$
\begin{array}{l}
\displaystyle 
\E^{(x_{0},t_0,1)}_{S_{I}^{*},S_{II}}[M_{k+1}\arrowvert (x_{k},t_k,2)] 
 \\[10pt] \qquad \displaystyle =(1-\eps^{2})\E^{(x_{0},t_0,1)}_{S_{I}^{*},S_{II}}[M_{k+1}\arrowvert (x_{k},t_k,2)\wedge j_{k+1}=2]+\eps^{2}\E^{(x_{0},1)}_{S_{I}^{*},S_{II}}[M_{k+1}\arrowvert (x_{k},t_k,2)\wedge j_{k+1}=1].
\end{array}
$$
Remark that when $j_{k}=j_{k+1}=2$ (this means that we play in the second board) with $x_{k}\in\Om$, then $x_{k+1}$ is chosen with
uniform probability in the ball $B_{\eps}(x_{k})$. Hence,
$$
\begin{array}{l}
\displaystyle \E^{(x_{0},t_0,1)}_{S_{I}^{*},S_{II}}[M_{k+1}\arrowvert (x_{k},t_k,2)\wedge j_{k+1}=2]
 \\[10pt] \displaystyle \qquad =\E^{(x_{0},t_0,1)}_{S_{I}^{*},S_{II}}[v^{\eps}(x_{k+1},t_{k+1})-\frac{\delta}{2^{k+1}}\arrowvert (x_{k},2)\wedge j_{k+1}=2]
\displaystyle =\kint_{B_{\eps}(x_{k})}v^{\eps}(y,t_k)dy-\frac{\delta}{2^{k+1}}.
\end{array}
$$
On the other hand, 
$$
\E^{(x_{0},t_0,1)}_{S_{I}^{*},S_{II}}[M_{k+1}\arrowvert (x_{k},t_k,2)\wedge j_{k+1}=1]=u^{\eps}(x_{k},t_k)-\frac{\delta}{2^{k+1}}.
$$
Collecting these estimates we obtain
$$
\begin{array}{l}
\displaystyle 
\E^{(x_{0},t_0,1)}_{S_{I}^{*},S_{II}}[M_{k+1}\arrowvert (x_{k},t_k,2)]=(1-\eps^{2})\left(\kint_{B_{\eps}(x_{k})}v^{\eps}(y,t_k)dy-\frac{\delta}{2^{k+1}}
\right)+\eps^{2} \Big(u^{\eps}(x_{k},t_k)-\frac{\delta}{2^{k+1}} \Big) \\[10pt]
\qquad \displaystyle \geq(1-\eps^{2})\kint_{B_{\eps}(x_{k})}v^{\eps}(y,t_k)dy+\eps^{2}u^{\eps}(x_{k},t_k)-\frac{\delta}{2^{k}},
\end{array}
$$
that is,
$$
\E^{(x_{0},t_0,1)}_{S_{I}^{*},S_{II}}[M_{k+1}\arrowvert (x_{k},t_k,2)]\geq v^{\eps}(x_{k},t_k)-\frac{\delta}{2^{k}}=M_{k}.
$$
Here we used that $v^{\eps}$ is a solution to the DPP, \eqref{DPP}. This ends the second case.

Therefore $(M_{k})_{k\geq 0}$ is a \textit{submartingale}. Using the \textit{OSTh} 
(recall that we have proved that $\tau$ is finite a.s. and that we have that $M_k$ is uniformly bounded) we conclude that 
$$
\E^{(x_{0},t_0,1)}_{S_{I}^{*},S_{II}}[M_{\tau}]\geq M_{0}
$$
where $\tau$ is the first time such that $(x_{\tau},t_\tau)\notin\Om\times (0,T]$ in any of the two boards. Then, 
$$
\E^{(x_{0},t_0,1)}_{S_{I}^{*},S_{II}}[\mbox{final payoff}]\geq u^\eps (x_{0},t_0)-\delta.
$$
We can compute the infimum  in $S_{II}$ and then
the supremum in $S_{I}$ to obtain
$$
\sup_{S_{I}}\inf_{S_{II}}\E^{(x_{0},t_0,1)}_{S_{I},S_{II}}[\mbox{final payoff}]\geq u^{\eps}(x_{0},t_0)-\delta.
$$

We just observe that if we have started in the second board the previous computations show that
$$
\sup_{S_{I}}\inf_{S_{II}}\E^{(x_{0},t_0,2)}_{S_{I},S_{II}}[\mbox{final payoff}]\geq v^{\eps}(x_{0},t_0)-\delta.
$$

Now our goal is to prove the reverse inequality (interchanging inf and sup). 
To this end we define an strategy for Player II with
$$
(x_{k+1}^{II},t_{k+1})=S_{II}^{*}\big((x_{0},t_0),...,(x_{k},t_k)\big)
\qquad
\mbox{is such that} 
\qquad
\inf_{y\in B_{\eps}(x_{k})}u^{\eps}(y,t_{k+1})+\frac{\delta}{2^{k}}\geq u^{\eps}(x_{k+1}^{II},t_{k+1}),
$$
and consider the sequence of random variables 
$$
N_{k}=\left\lbrace
 \begin{array}{ll}
 \displaystyle u^{\eps}(x_{k},t_k)+\frac{\delta}{2^{k}} & \ \ \mbox{if }  j_{k}=1  \\[10pt]
 \displaystyle v^{\eps}(x_{k},t_k)+\frac{\delta}{2^{k}} & \ \ \mbox{if }   j_{k}=2. 
 \end{array}
 \right.
$$
Arguing as before we obtain that this sequence is a \textit{supermartingale}.
From the \textit{OSTh} we get 
$$
\E^{(x_{0},t_0,1)}_{S_{I}^{*},S_{II}}[N_{\tau}]\leq N_{0}
$$
where $\tau$ is the stopping time for the game. Then, 
$$
\E^{(x_{0},t_0,1)}_{S_{I},S_{II}^{*}}[\mbox{final payoff}]\leq u^\eps (x_{0},t_0)+\delta.
$$
Taking supremum in $S_{I}$ and then infimum in $S_{II}$ we obtain
$$
\inf_{S_{II}}\sup_{S_{I}}\E^{(x_{0},t_0,1)}_{S_{I},S_{II}}[\mbox{final payoff}]\leq u^{\eps}(x_{0},t_0)+\delta.
$$
As before, the same ideas starting at $(x_{0},t_0,2)$ give us
$$
\inf_{S_{II}}\sup_{S_{I}}\E^{(x_{0},1)}_{S_{I},S_{II}}[\mbox{final payoff}]\leq v^{\eps}(x_{0})+\delta.
$$

To end the proof we just observe that 
$$
\sup_{S_{I}}\inf_{S_{II}}\E_{S_{I},S_{II}}[\mbox{final payoff}]\leq \inf_{S_{II}}\sup_{S_{I}}\E_{S_{I},S_{II}}[\mbox{final payoff}].
$$
Therefore,
$$
u^{\eps}(x_{0},t_0)-\delta\leq\sup_{S_{I}}\inf_{S_{II}}\E_{S_{I},S_{II}}^{(x_{0},t_0,1)}[\mbox{final payoff}]
\leq \inf_{S_{II}}\sup_{S_{I}}\E_{S_{I},S_{II}}^{(x_{0},t_0,1)}[\mbox{final payoff}]\leq u^{\eps}(x_{0},t_0)+\delta
$$
and
$$
v^{\eps}(x_{0},t_0)-\delta\leq\sup_{S_{I}}\inf_{S_{II}}\E_{S_{I},S_{II}}^{(x_{0},t_0,2)}[\mbox{final payoff}]\leq 
\inf_{S_{II}}\sup_{S_{I}}\E_{S_{I},S_{II}}^{(x_{0},t_0,2)}[\mbox{final payoff}]\leq v^{\eps}(x_{0},t_0)+\delta.
$$
As $\delta >0$ is arbitrary the proof is finished.
\end{proof}

\section{Uniform convergence}
\label{sect-uniform}

Now our aim is to pass to the limit in the values of the game and prove that
$$
u^\eps \to u \ \mbox{ and }  \ v^\eps \to v, \qquad \mbox{as } \eps \to 0,
$$
uniformly in $\overline{\Omega} \times [0,T]$,
and then in the next section to obtain that this limit pair $(u,v)$ is a viscosity solution to our system \eqref{ED1}.

To obtain a convergent subsequence we will use the following
Arzela-Ascoli type lemma both for $u^\eps$ and for $v^\eps$. For its proof see Lemma~4.2 from \cite{MPRb}.

\begin{Lemma}\label{lem.ascoli.arzela} Let $\{w^\eps : \overline{\Omega} \times [0,T]
\to \R,\ \eps>0\}$ be a set of functions such that
\begin{enumerate}
\item there exists $C>0$ such that $|{w^\eps (x,t)}|<C$ for
    every $\eps >0$ and every $x \in \overline{\Omega}$, $t\in [0,T]$,
\item \label{cond:2} given $\delta >0$ there are constants
    $r_0$ and $\eps_0$ such that for every $\eps < \eps_0$
    and any $x, y \in \overline{\Omega}$ with $|x - y | < r_0 $ and any $t,s \in [0,T]$ with
    $|t-s|<r_0$,
    it holds
$$
|w^\eps (x,t) - w^\eps (y,s)| < \delta.
$$
\end{enumerate}
Then, there exists  a uniformly continuous function $w:
\overline{\Omega} \times [0,T] \to \R$ and a subsequence still denoted by
$\{w^\eps \}$ such that
\[
\begin{split}
w^{\eps}\to w \qquad\textrm{ uniformly in }\overline{\Omega} \times [0,T],
\mbox{ as $\eps\to 0$.}
\end{split}
\]
\end{Lemma}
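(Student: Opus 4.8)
The plan is to run the standard Arzel\`a--Ascoli machinery---diagonal extraction against a countable dense set, then a uniform-Cauchy argument---but adapted to the fact that hypothesis~(\ref{cond:2}) is only an \emph{asymptotic} modulus of continuity: it holds only once $\eps<\eps_0$, so one must be careful never to invoke it for large $\eps$. First I would fix a countable dense subset $D=\{z_1,z_2,\dots\}$ of the compact cylinder $\overline{\Omega}\times[0,T]$. By the uniform bound~(1), for each fixed $z_i$ the scalar sequence $\{w^\eps(z_i)\}$ is bounded; extracting successively and diagonalizing produces a single subsequence, still denoted $\{w^\eps\}$, along which $w^\eps(z_i)$ converges for every~$i$.

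Next I would upgrade this pointwise convergence on $D$ to uniform convergence on the whole cylinder. Given $\delta>0$, apply~(\ref{cond:2}) with $\delta/3$ to get $r_0,\eps_0$, and use compactness to cover $\overline{\Omega}\times[0,T]$ by finitely many boxes $\{(y,s):|x_j-y|<r_0,\ |t_j-s|<r_0\}$ centered at points $(x_j,t_j)\in D$, $j=1,\dots,m$. For any $(x,t)$ choose a center $(x_j,t_j)$ with $|x-x_j|<r_0$ and $|t-t_j|<r_0$, and for $\eps,\eps'<\eps_0$ split
\[
|w^\eps(x,t)-w^{\eps'}(x,t)| \le |w^\eps(x,t)-w^\eps(x_j,t_j)| + |w^\eps(x_j,t_j)-w^{\eps'}(x_j,t_j)| + |w^{\eps'}(x_j,t_j)-w^{\eps'}(x,t)|.
\]
The first and third terms are $<\delta/3$ by~(\ref{cond:2}), while the middle term is $<\delta/3$ for $\eps,\eps'$ small, thanks to convergence at the finitely many net points $(x_j,t_j)\in D$. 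Hence $\{w^\eps\}$ is uniformly Cauchy and converges uniformly to some limit $w$.

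Finally I would verify that $w$ is uniformly continuous; this does not come for free, since the $w^\eps$ are themselves typically discontinuous (being defined by discrete steps), so their uniform limit is not automatically continuous unless one argues directly. Given $\delta>0$, take $r_0,\eps_0$ from~(\ref{cond:2}) with $\delta/3$; for $|x-y|<r_0$, $|t-s|<r_0$ and $\eps<\eps_0$ one has
\[
|w(x,t)-w(y,s)| \le |w(x,t)-w^\eps(x,t)| + |w^\eps(x,t)-w^\eps(y,s)| + |w^\eps(y,s)-w(y,s)|,
\]
where the middle term is $<\delta/3$ by~(\ref{cond:2}) and the outer terms tend to $0$ by the uniform convergence just established; letting $\eps\to0$ gives $|w(x,t)-w(y,s)|\le\delta/3<\delta$.

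The main obstacle is precisely the asymptotic character of~(\ref{cond:2}): the family cannot be treated as equicontinuous in the classical sense, so the comparison on the finite net must be arranged so that~(\ref{cond:2}) is only ever applied with $\eps<\eps_0$, the remaining ``middle'' discrepancy at the net points being absorbed by the genuine pointwise convergence secured by the diagonal extraction. Compactness of $\overline{\Omega}\times[0,T]$ is what reduces the uniform control to finitely many such points.
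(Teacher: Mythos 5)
Your proof is correct and follows essentially the same route as the paper's: the paper does not prove this lemma itself but defers it to Lemma~4.2 of \cite{MPRb}, whose argument is precisely this one --- diagonal extraction over a countable dense set, a finite net from compactness to upgrade to uniform Cauchy, with the asymptotic hypothesis~(2) invoked only for $\eps<\eps_0$, and the same three-term triangle inequality to get uniform continuity of the limit. Nothing is missing; your care in never applying~(2) for large $\eps$ is exactly the point that distinguishes this from the classical Arzel\`a--Ascoli theorem.
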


So our task now is to show that $u^\eps$ and $v^\eps$ both satisfy the hypotheses of the previous lemma. First, we
observe that they are uniformly bounded.

\begin{Lemma}\label{lem.ascoli.arzela.acot} 
There exists a constant $C>0$ independent of $\eps$ such that $$|u^\eps (x,t)|\leq C, \qquad |v^\eps (x,t)|\leq C,$$ for
    every $\eps >0$ and every $(x,t) \in \overline{\Omega} \times [0,T]$.
\end{Lemma}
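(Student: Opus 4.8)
The plan is to bound both components simultaneously using the value function characterization together with the constant $C$ from \eqref{kkk}. Since the previous theorem shows that $(u^\eps,v^\eps)$ is the value of the game, we have
\[
u^\eps(x,t)=\sup_{S_\I}\inf_{S_\II}\E^{(x,t,1)}_{S_\I,S_\II}[h(x_\tau,t_\tau)],
\qquad
v^\eps(x,t)=\sup_{S_\I}\inf_{S_\II}\E^{(x,t,2)}_{S_\I,S_\II}[h(x_\tau,t_\tau)].
\]
The key observation is that the final payoff $h$ is always one of $\ol f(x_\tau,t_\tau)$, $\ol g(x_\tau,t_\tau)$, or $u_0(x_\tau)$, according to the board and whether $t_\tau>0$ or $t_\tau\le 0$. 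In every case $|h(x_\tau,t_\tau)|\le C$ pointwise, by the very definition of $C$ in \eqref{kkk}. Hence for any starting point and any pair of strategies,
\[
\Big|\E^{(x,t,j)}_{S_\I,S_\II}[h(x_\tau,t_\tau)]\Big|
\le \E^{(x,t,j)}_{S_\I,S_\II}\big[\,|h(x_\tau,t_\tau)|\,\big]\le C,
\]
where the first inequality is Jensen's inequality for the absolute value and the second uses $|h|\le C$ together with the fact that the game ends almost surely (so the expectation is taken over a genuine probability measure on finite histories, $\P(\tau=+\infty)=0$).

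First I would record that, since the bound $|\E_{S_\I,S_\II}[h]|\le C$ is uniform over all strategies $S_\I$ and $S_\II$ and over both boards, taking $\sup_{S_\I}\inf_{S_\II}$ preserves it: a supremum and an infimum of quantities lying in $[-C,C]$ still lie in $[-C,C]$. This gives $|u^\eps(x,t)|\le C$ for points $(x,t)\in\Omega\times(0,T]$ in the first board and $|v^\eps(x,t)|\le C$ for points in the second board. For boundary points $x\in\R^N\setminus\Omega$ the DPP \eqref{DPP} forces $u^\eps=\ol f$ and $v^\eps=\ol g$, which are bounded by $C$ by \eqref{kkk}; likewise $u^\eps(x,t)=u_0(x)$ for $t\le 0$. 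Thus the bound extends to all of $\overline\Omega\times[0,T]$, and crucially the constant $C$ depends only on the data through \eqref{kkk} and not on $\eps$.

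An alternative, purely PDE-flavored route that avoids invoking the game value is to argue directly from the DPP \eqref{DPP} that the constant functions $(C,C)$ and $(-C,-C)$ act as a supersolution and subsolution. Indeed, plugging $u^\eps\equiv v^\eps\equiv C$ into the right-hand sides of the two DPP equations reproduces $C$ on the left, and one checks the ordering is preserved; a comparison argument for the DPP operator (which is monotone, being built from $\sup$, $\inf$, and averaging, all order-preserving) then sandwiches $-C\le u^\eps,v^\eps\le C$. The main obstacle in either approach is essentially bookkeeping rather than a genuine difficulty: one must be careful that the expectation is well defined, which is exactly why the almost-sure finiteness of $\tau$ (already established at the end of Section~\ref{sect-Game}) is needed, and one must verify that the sup--inf operation does not destroy the uniform bound. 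Since both are elementary, this lemma is the easy first of the two Arzel\`a--Ascoli hypotheses; the genuinely hard estimate is the asymptotic continuity in condition~\eqref{cond:2} of Lemma~\ref{lem.ascoli.arzela}, which is treated separately.
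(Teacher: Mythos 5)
Your proposal is correct and takes essentially the same route as the paper: the paper's proof is the one-line observation that the value functions are (sup--inf of) expectations of the final payoff $h\in\{\ol f,\ol g,u_0\}$, which is pointwise bounded by $C=\max\{\|f\|_\infty,\|g\|_\infty,\|u_0\|_\infty\}$, so the values inherit the same bound uniformly in $\eps$. Your version merely spells out the bookkeeping (Jensen, a.s.\ finiteness of $\tau$, sup--inf preserving bounds, boundary/initial values) that the paper leaves implicit.
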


\begin{proof} 
We observe that we can take
$$
C = \max \{ \| g\|_\infty, \| f \|_\infty, \|u_0\|_\infty \},
$$
since the final payoff in any of the boards is bounded by this constant $C$.
\end{proof}

To prove the second hypothesis of Lemma \ref{lem.ascoli.arzela} we will need 
some key estimates according to the board in which we are playing.

\subsection{Estimates for the Tug-of-War game}

In this case we are going to assume that we are permanently playing in board 1 (with the Tug-of-War game).
We introduce the notations $\Om_T =\Om\times (0,T]$ for a parabolic cilinder and $\partial_p \Om_T =\big[\partial\Om\times (0,T]\big]\cup \big[\, \ol{\Om}\times \{ 0 \}\big]$ for
the parabolic boundary.

\begin{Lemma}\label{lema.estim.ToW}
	Given $\eta>0$ and $a>0$, there exist $r_{0}>0$, $l_0>0$ and $\eps_{0}>0$ such that, given $(y,s)\in\partial_p \Om_T$ and $(x_{0},t_0)\in\Om_T$ with $| x_{0}-y |<r_{0}$, $|t_0-s|<l_0$ 
	any of the two players has a strategy $S^{*}$ with which we obtain 
	$$\mathbb{P}\big(| x_{\tau}-y | < a \big) \geq 1 - \eta \quad \mbox{,} \quad \P\big(|t_{\tau}-s|< a \big)\geq 1-\eta \quad \mbox{and} \quad
	\mathbb{P}\Big(\tau \geq \frac{a}{2\eps^2}\Big)< \eta$$
	for $\eps<\eps_{0}$ and here $(x_{\tau},t_{\tau})$ denotes the first position outside $\Om\times (0,T]$.
\end{Lemma}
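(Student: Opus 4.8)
The plan is to exhibit a single strategy $S^*$ — the same one for either player — and to read off the three estimates from it. I will use the strategy \emph{pull toward $y$}: whenever the player wins the coin toss she moves the token to the point of $\overline{B_\eps(x_k)}$ closest to $y$. Since the Tug-of-War rules are symmetric in the two players, this strategy is available to Player $\I$ and to Player $\II$ alike, so it suffices to analyze it once. Throughout I work in board $1$, where every play lowers the time by exactly $\eps^2$, so that $t_k=t_0-k\eps^2$ and hence $\tau\eps^2=t_0-t_\tau$; this identity is what lets me trade the time estimate for the bound on the number of plays.

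First I would dispose of the time estimate. Because $t_k=t_0-k\eps^2$ in board $1$, the event $\{\tau\ge a/(2\eps^2)\}$ is exactly $\{t_\tau\le t_0-a/2\}$, and since $t_\tau\le t_0<s+l_0$ the only way to have $|t_\tau-s|\ge a$ is $t_\tau\le s-a$, which forces $t_\tau\le t_0-a+l_0$. Choosing $l_0\le a/2$ makes this last event contained in $\{\tau\ge a/(2\eps^2)\}$, so the time estimate follows from the bound on $\tau$. Thus I am left with the spatial estimate and the control of $\tau$.

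For the spatial estimate I set $r_k=|x_k-y|$ and check that, under $S^*$, $r_k$ is a nonnegative supermartingale no matter what the opponent does: when our player wins she decreases $r$ by $\eps$, and when the opponent wins he increases it by at most $\eps$, so $\E[r_{k+1}\mid \mathcal{F}_k]\le r_k$ (the only exception is $r_k<\eps$, but then a win ends the game, so stopping at $\tau$ is harmless). Doob's maximal inequality for the stopped supermartingale $r_{k\wedge\tau}$ then gives $\P\big(\sup_{k}r_{k\wedge\tau}\ge a\big)\le |x_0-y|/a$, and in particular $\P(|x_\tau-y|\ge a)\le r_0/a<\eta$ once $r_0<\eta a$. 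This argument is insensitive to whether $y$ lies on the lateral boundary or at the bottom, so it settles the first estimate in all cases.

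It remains to bound $\tau$, and this is where I expect the real work. If $(y,s)$ is a bottom point ($s=0$) I use $l_0\le a/2$, so that $t_0<a/2$ and the game must stop by exhausting time after at most $t_0/\eps^2\le a/(2\eps^2)$ plays, making $\{\tau\ge a/(2\eps^2)\}$ empty. If instead $y\in\partial\Om$, then reaching $y$ means leaving $\Om$, and the token also exits earlier if it happens to cross $\partial\Om$ first; in either case the game ends no later than the first time $r_k<\eps$ is followed by a win. I would then dominate $\tau$ by the first hitting time of $0$ of a one-dimensional simple random walk of step $\eps$ started at $|x_0-y|<r_0$ (the adversary's best delaying play is to push radially outward, which is exactly the symmetric walk; any other play only speeds up the descent), confine the excursion to $[0,a)$ by the maximal inequality above, and invoke the standard hitting-time tail bound $\P(\tau>M)\le C\,(r_0/\eps)/\sqrt{M}$ with $M=a/(2\eps^2)$, giving $\P(\tau\ge a/(2\eps^2))\le C r_0/\sqrt a + r_0/a$. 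The uniform exterior sphere condition enters here to ensure, uniformly in $y$, that being pulled to within $\eps$ of $y$ genuinely forces an exit of $\Om$ (no inward cusp can trap the token). The main obstacle is precisely this third estimate: making the comparison with the random walk rigorous against an arbitrary adversarial strategy, controlling the small-$r$ regime where $r_k$ ceases to be a supermartingale, and keeping every constant uniform in $(y,s)$ so that one choice of $r_0,l_0,\eps_0$ serves all parabolic boundary points. Once the three probabilistic bounds are in hand, I fix $r_0$ and $l_0\le a/2$ small enough to beat $\eta$ in each, take $\eps_0$ so small that the random-walk comparison is valid, and conclude.
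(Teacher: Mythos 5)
Your skeleton (a pull-toward-$y$ strategy, a distance supermartingale, reduction of the time estimate to the bound on $\tau$, and the lateral/bottom case split) is the same as the paper's, and your time reduction and Case-1 spatial bound are essentially sound. But there is a genuine gap in the bottom case $s=0$ with $y\in\Om$, exactly where you claim your spatial argument ``is insensitive to whether $y$ lies on the lateral boundary or at the bottom.'' The process $r_k=|x_k-y|$ is \emph{not} a supermartingale when $r_k<\eps$: a win sends the token to $y$, so $\E[r_{k+1}\mid \mathcal{F}_k]\leq \half\cdot 0+\half(r_k+\eps)=\frac{r_k+\eps}{2}$, which exceeds $r_k$ whenever $r_k<\eps$. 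Your parenthetical fix (``a win ends the game'') is only valid when $y\in\partial\Om$; for an interior bottom point, hitting $y$ does \emph{not} end the game (only time exhaustion does), so the token can revisit the region $\{r_k<\eps\}$ on the order of $l_0/\eps^2$ times and accumulate an upward drift of order $(\eps/2)\cdot(l_0/\eps^2)=l_0/(2\eps)$, which blows up as $\eps\to 0$. Concretely, if the adversary always pushes radially away from $y$, then $r_k$ behaves like a symmetric walk of step $\eps$ reflected at $0$, and over $M\approx l_0/\eps^2$ plays its running maximum is of order $\sqrt{l_0}$; hence $\P\big(\sup_k r_{k\wedge\tau}\geq a\big)$ is of order $l_0/a^2$, not $r_0/a$, and your choices $r_0<\eta a$, $l_0\leq a/2$ do not make it small. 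Closing the gap requires the ingredient the paper uses in its Case 2: at a bottom point the number of plays is deterministically at most $l_0/\eps^2$, the displacement per play is at most $\eps$, and Chebyshev (or Kolmogorov's maximal inequality) applied to the coin-toss imbalance gives $\P(|x_\tau-x_0|\geq a/2)\leq C\, l_0/a^2$; this forces the \emph{additional} smallness condition $l_0\lesssim \eta a^2$, not merely $l_0\leq a/2$.

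Two further points. First, your strategy is inadmissible as stated: moves must lie in the \emph{open} ball $B_\eps(x_k)$, so one cannot displace by exactly $\eps$; this is why the paper pulls by $\eps-\eps^{3}/2^{k}$ and works with the corrected supermartingale $N_k=|x_k|+\eps^{3}/2^{k}$ (the same summable correction would repair your Doob argument in Case 1). Second, the uniform exterior sphere condition is neither a hypothesis of this lemma nor needed here: once $r_k<\eps$, moving exactly to $y\in\partial\Om$ is legal and exits $\Om$; in the paper that condition enters only in the random-walk (second board) estimate. Finally, your random-walk domination for $\P\big(\tau\geq \frac{a}{2\eps^2}\big)$ in Case 1 is stated heuristically (``any other play only speeds up the descent''); it can be made rigorous by coupling, but the paper avoids it altogether by showing that $\W_k=N_k^2+k\eps^2/3$ is a supermartingale, which via the optional stopping theorem yields $\E[\tau]\leq 4|x_0|^2\eps^{-2}$ and then Markov's inequality.
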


\begin{proof}
	We consider two cases depending of the position of $(y,s)$ in $\partial_p \Om_T=\big[\partial\Om\times (0,T]\big]\cup \big[\, \ol{\Om}\times \{ 0 \}\big]$. 
	
	\textbf{Case 1:} If $(y,s)\in\partial\Om\times (0,T]$. 
	We can assume without loss of generality that $y=0\in\partial\Om$. In this case we consider the strategy $S^{*}$ (this strategy can be used
	by any of the two players) that is given by \lq\lq point to the point $y=0$''. This strategy is given by
	$$x_{k+1}=S^{*}\big(x_{0},x_{1},...,x_{k}\big)=x_{k}+\Big(\frac{\eps^{3}}{2^{k}}-\eps \Big)\frac{x_{k}}{ |x_{k} |},$$
	when $|x_k|\geq \eps$, in the other case we take $x_{k+1}=0$. Notice that the strategy only depends of de position $x_k$, not of the time $t_k$ 
	(that is deterministic, $t_{k+1}=t_k-\eps^2$, and then not affected by the choice of the strategies). 
	Now, let us consider the random variables
	$$
	N_{k}=
	|x_{k}|+\frac{\eps^{3}}{2^{k}}
	$$
	for $k\geq 0$ and play assuming that one of the players uses the $S^{*}$ strategy. The goal is to prove that $\{N_{k}\}_{k\geq 0}$ 
	is \textit{supermartingale}, i.e.,
	$$ \E[N_{k+1}\arrowvert N_{k}]\leq N_{k}. $$
	Note that with probability 1/2 we obtain
	$$x_{k+1}=x_{k} + \Big(\frac{\eps^{3}}{2^{k}}-\eps \Big)\frac{x_{k}}{ | x_{k} |}$$
	this is the case when the player who uses the $S^{*}$ strategy wins the coin toss. On the other hand, we have 
	$$| x_{k+1}|\leq | x_{k}| + \eps, $$ 
	when the other player wins (no matter what strategy he uses).
	Then, we obtain
	$$ \E\Big[| x_{k+1}| \arrowvert  x_{k} \Big]\leq \half \Big(| x_{k}| +(\frac{\eps^{3}}{2^{k}}-\eps)\Big) + \half(| x_{k} | + \eps)= | x_{k} |
	+\frac{\eps^{3}}{2^{k+1}}.$$
	Hence, we get
	$$ \E \Big[N_{k+1}\arrowvert N_{k}\Big]=\E \Big[ | x_{k+1}| + \frac{\eps^{3}}{2^{k+1}}\arrowvert | x_{k} | + \frac{\eps^{3}}{2^{k}}\Big]
	\leq |x_{k}| + \frac{\eps^{3}}{2^{k+1}}+\frac{\eps^{3}}{2^{k+1}}=N_{k}.$$
	We just proved that $\{N_{k}\}_{k\geq 0}$ is a \textit{supermartingale}.
	Now, let us consider the random variables
	$$ (N_{k+1}-N_{k})^{2},$$
	and the event
	\begin{equation}
	\label{Fk}
	F_{k}=\{  the \ player \ who \ points \ to \ 0\in\partial\Om \ wins \ the \ coin \ toss  \}.
	\end{equation}
	Then we have the following
	$$
	\begin{array}{l}
	\displaystyle \E[(N_{k+1}-N_{k})^{2}\arrowvert N_{k}]
	\displaystyle =\half\E[(N_{k+1}-N_{k})^{2}\arrowvert N_{k} \wedge F_{k}]+\half\E[(N_{k+1}-N_{k})^{2}\arrowvert N_{k} \wedge F_{k}^{c}]
	\\[10pt]
	\qquad \displaystyle \geq 
	\displaystyle  \half\E[(N_{k+1}-N_{k})^{2}\arrowvert N_{k} \wedge F_{k}].
	\end{array}
	$$
	Let us observe that when $|x_k|\geq \eps$
	$$
	\begin{array}{l}
	\displaystyle
	\half\E[(N_{k+1}-N_{k})^{2}\arrowvert N_{k} \wedge F_{k}]
	\displaystyle =\half\E \Big[( |x_{k}| -\eps+\frac{\eps^{3}}{2^{k}}+\frac{\eps^{3}}{2^{k+1}}- | x_{k}| -\frac{\eps^{3}}{2^{k}})^{2} \Big]
	\displaystyle =\half\E \Big[(-\eps+\frac{\eps^{3}}{2^{k+1}})^{2} \Big]\geq\frac{\eps^{2}}{3}
	\end{array}
	$$
	if $\eps<\eps_{0}$ for $\eps_{0}$ small enough. In the case that $|x_k|<\eps$ we have that $x_{k+1}=0$, then
	$$
	\begin{array}{l}
	\displaystyle
	\half\E[(N_{k+1}-N_{k})^{2}\arrowvert N_{k} \wedge F_{k}]
	\displaystyle =\half\E \Big[( \frac{\eps^{3}}{2^{k+1}}- | x_{k}| -\frac{\eps^{3}}{2^{k}})^{2} \Big]
	\displaystyle >\half\E \Big[(-\eps-\frac{\eps^{3}}{2^{k+1}})^{2} \Big]\geq\frac{\eps^{2}}{3}
	\end{array}
	$$
	and we have obtained the same estimate.
	With this estimate in mind we get
	\begin{equation}
	\label{e/3b}
	\E[(N_{k+1}-N_{k})^{2}\arrowvert N_{k}] \geq \frac{\eps^{2}}{3}.
	\end{equation}
	Now, we analyze $N_{k}^{2}-N_{k+1}^{2}$. We have
	\begin{equation}
	\label{M2b}
	N_{k}^{2}-N_{k+1}^{2}=(N_{k+1}-N_{k})^{2}+2N_{k+1}(N_{k}-N_{k+1}).
	\end{equation}
	Let us prove that $\E[N_{k+1}(N_{k}-N_{k+1})\arrowvert N_{k}]\geq 0$ using the set $F_{k}$ defined by \eqref{Fk}. It holds that
	$$
	\begin{array}{l}
	\displaystyle
	\E[N_{k+1}(N_{k}-N_{k+1})\arrowvert N_{k}]
	\\[10pt]
	\qquad \displaystyle =\half\E[N_{k+1}(N_{k}-N_{k+1})\arrowvert N_{k}\wedge F_{k}]+\half\E[N_{k+1}(N_{k}-N_{k+1})\arrowvert N_{k}\wedge F_{k}^{c}]
	\\[10pt]
	\qquad \displaystyle =
	\half \Big[(|x_{k}|-\eps +\frac{\eps^{3}}{2^{k}}+\frac{\eps^{3}}{2^{k+1}})(|x_{k}|+\frac{\eps^{3}}{2^{k}}-|x_{k}|+\eps-\frac{\eps^{3}}{2^{k}}-\frac{\eps^{3}}{2^{k+1}})\Big] \\[10pt]
	\qquad \displaystyle \qquad +
	\half \Big[(|x_{k+1}|+\frac{\eps^{3}}{2^{k+1}})(|x_{k}|+\frac{\eps^{3}}{2^{k}}-|x_{k+1}|-\frac{\eps^{3}}{2^{k+1}})\Big]
	\\[10pt]
	\qquad \displaystyle 
	\geq
	\half \Big(|x_{k}|-\eps+\frac{\eps^{3}}{2^{k}}+\frac{\eps^{3}}{2^{k+1}}\Big)\Big(\eps-\frac{\eps^{3}}{2^{k+1}}\Big)
	\\[10pt]
	\qquad \displaystyle  \qquad +\half \Big[(|x_{k}|-\eps+\frac{\eps^{3}}{2^{k+1}})(|x_{k}|+\frac{\eps^{3}}{2^{k}}-
	|x_{k}|-\eps-\frac{\eps^{3}}{2^{k+1}})\Big],
	\end{array} 
	$$
	here we used that  $|x_{k}|-\eps\leq|x_{k+1}|\leq|x_{k}|+\eps$. Thus, we have
	$$\E[N_{k+1}(N_{k}-N_{k+1})\arrowvert N_{k}]\geq\half(|x_{k}|-\eps+\frac{\eps^{3}}{2^{k+1}}+\frac{\eps^{3}}{2^{k}})(\eps-\frac{\eps^{3}}{2^{k+1}})+\half(|x_{k}|-\eps+\frac{\eps^{3}}{2^{k+1}})(-\eps+\frac{\eps^{3}}{2^{k+1}}),$$
	and then, 
	$$\E[N_{k+1}(N_{k}-N_{k+1})\arrowvert N_{k}]\geq\half \Big[\frac{\eps^{3}}{2^{k}}(\eps-\frac{\eps^{3}}{2^{k+1}})\Big]\geq 0.$$
	If we go back to \eqref{M2b} and use \eqref{e/3b} and the result we just obtained we arrive to
	$$\E[N_{k}^{2}-N_{k+1}^{2}\arrowvert N_{k}]\geq\E[(N_{k+1}-N_{k})^{2}\arrowvert N_{k}]\geq \frac{\eps^{2}}{3}.$$
	For the sequence of random variables
	$$\W_{k}=N^{2}_{k}+\frac{k\eps^{2}}{3}$$
	we have
	$$\E[\W_{k}-\W_{k+1}\arrowvert \W_{k}]=\E[N_{k}^{2}-N_{k+1}^{2}-\frac{\eps^{2}}{3}\arrowvert \W_{k}]\geq 0.$$
	as  $\E[\W_{k}\arrowvert\W_{k}]=\W_{k}$ then
	$$\E[\W_{k+1}\arrowvert \W_{k}]\leq\W_{k}.$$
	We have proved that the sequence $\{\W_{k}\}_{k\geq 1}$ is a \textit{supermartingale}. In order to use the \textit{OSTh},
	given a fixed integer $m\in\mathbb{N}$ we define the stopping time
	$$ \tau_{m}=\tau \wedge m := \min\{\tau,m\}.$$
	This new stopping time verifies
	$ \tau_{m}\leq m $
	which is the first hypothesis of the \textit{OSTh}. Then, we obtain
	$$ \E[\W_{\tau_{m}}]\leq \W_{0}. $$
	Observe that $\lim\limits_{m\rightarrow\infty}\tau\wedge m = \tau $ almost surely. Then, using \textit{Fatou's Lemma}, we arrive to
	$$ \E[\W_{\tau}]=\E[\liminf_{m} \W_{\tau\wedge m}]\underbrace{\leq}_{Fatou} \liminf_{m} \E[\W_{\tau\wedge m}]\underbrace{\leq}_{OSTh} \W_{0}.$$
	Thus, we obtain 
	$ \E[\W_{\tau}]\leq \W_{0}$,
	i.e.,
	\begin{equation}
	\label{OST2}
	\E \Big[N^{2}_{\tau}+\frac{\tau\eps^{2}}{3} \Big]\leq N_{0}^{2}.
	\end{equation}
	Then,
	$$\E[\tau]\leq 3(|x_{0}|+\eps^{3})^{2}\eps^{-2}\leq 4|x_{0} |^{2}\eps^{-2}$$
	if $\eps$ is small enough. 
	On the other hand, if we go back to \eqref{OST2} we have
	$$\E[N_{\tau}^{2}]\leq N_{0}^{2},$$
	i.e.
	$$\E[|x_{\tau}|^{2}]\leq\E \Big[(|x_{\tau}|+\frac{\eps^{3}}{2^{\tau}})^{2} \Big]\leq (|x_{0}|+\eps^{3})^{2}\leq 2 |x_{0}|^{2}.$$
	What we have so far is that
	\begin{equation}
	\label{etau}
	\E[\tau]\leq 4|x_{0}|^{2}\eps^{-2}
	\qquad \mbox{and} \qquad \E[|x_{\tau}|^{2}]\leq 2 |x_{0}|^{2}.
	\end{equation}
	Given $\eta > 0$ and $a > 0$, we take $x_{0}\in\Om$ such that $|x_{0} |< r_{0}$ with $r_{0}$ that will be choosed later
	(depending on $\eta$ and $a$). We have
	$$ C r_{0}^{2}\eps^{-2}\geq C |x_{0} |^{2}\eps^{-2} \geq \E^{x_{0}}[\tau]\geq \P \Big(\tau \geq \frac{a}{2\eps^{2}}\Big)\frac{a}{2\eps^{2}}.$$
	Thus
	\begin{equation}
	\label{Ptau}
	\P \Big(\tau \geq \frac{a}{2\eps^{2}} \Big)\leq C 2\frac{r_{0}^{2}}{a}< \eta
	\end{equation}
	which holds true if $r_{0}<\sqrt{\frac{\eta a}{2C}}$.
	
	Now, from \eqref{Ptau} we have
	$$
	\P \Big(\tau\eps^2 < \frac{a}{2} \Big)\geq 1-\eta \quad \Rightarrow \quad \P(t_0-\tau\eps^2 >t_0- \frac{a}{2})\geq 1-\eta.
	$$
	Then, using that $t_{\tau}<t_0$ we obtain
	$$
	\P \Big(|t_{\tau}-t_0|<\frac{a}{2} \Big)\geq 1-\eta.
	$$
	Observe that, if we take $l_0<\frac{a}{2}$, we have
	$$
	|t_{\tau}-s|\leq |t_{\tau}-t_0|+|t_0-s|<|t_{\tau}-t_0|+\frac{a}{2}
	$$
	Then
	$$
	\Big\{|t_{\tau}-t_0|<\frac{a}{2} \Big\}\subseteq \Big\{|t_{\tau}-s|<a \Big\},
	$$
	and we can conclude that 
	$$
	\P(|t_{\tau}-s|<a)\geq 1-\eta.
	$$
	Also we have
	$$ C r_{0}^{2} \geq C |x_{0} |^{2}\geq \E^{x_{0}}[|x_{\tau} |^{2}]\geq a^{2}\P(|x_{\tau} |^{2}\geq a^{2}).$$
	Then 
	$$ \P(|x_{\tau} |\geq a)\leq C\frac{r_{0}^{2}}{a^{2}}< \eta$$
	which holds true if $r_{0}< \sqrt{\frac{\eta a^{2}}{C}}$. Observe that if we take $a<1/2$ we have $\sqrt{\frac{\eta a^{2}}{C}}<\sqrt{\frac{\eta a}{2C}}$, then if we choose $r_{0}<\sqrt{\frac{\eta a^{2}}{C}}$ both conditions are fulfilled at the same time.
	
	\textbf{Case 2:} Suppose that $s=0$, that is we have a point $(y,0)$ with $y\in\overline{\Om}$. One more time, we use as strategy 
	for any of the two players to point towards $y$, as we defined above. that is,
	$$
	x_{k+1}=S^{*}\big(x_0,...,x_k\big)=x_k + \Big(\frac{\eps^3}{2^k}-\eps \Big)\frac{y-x_k}{|y-x_k|}
	$$
	if $|x_k-y|\geq \eps$ and $x_{k+1}=y$ in other case. We can assume without loss of generality that player I uses this strategy. 
	
	Suppose that $0<t_0<l_0$ for some $l_0$ small (to be chosen latter). Then the stopping time is bounded. In fact, 
	$\tau\leq [\frac{l_0}{\eps^2}]$ whit probability 1. Let us call 
	$M=[\frac{l_0}{\eps^2}]$. Since $t_{\tau}< t_0$ it is enough take $l_0\leq a/2$ to get 
	$$
	\P(\tau \geq \frac{a}{2\eps^2})=0 \quad \mbox{ and } \quad \P(|t_{\tau}|<a)=1.
	$$
	Now we define the following random variables:
	$$
	X_{k}=  \left\{
	\begin{array}{ll}
	1 \quad & \quad \mbox{if Player II wins,} \\[10pt]
	-1 \quad & \quad \mbox{if Player I wins,} 
	\end{array}	\right. 
	$$
	for $k\geq 1$ and 
	$$
	Z_k=\sum_{j=1}^{k}X_k.
	$$
	Observe that $X_k$ are independent with $\E[X_k]=0$ and $\mathbb{V}[X_{k}]=1$. 
	Then, $\E[Z_k]=0$ and $\mathbb{V}[Z_k]=k$. If we use Chevichev's Theorem (see \cite{Ferrari}) we obtain
	$$
	\P(|Z_M|\geq \frac{a}{2\eps})\leq \frac{\mathbb{V}[Z_M]}{(\frac{a}{2\eps})^2}=\frac{M4\eps^2}{a^2}\leq 4\frac{(\frac{l_0}{\eps^2}+1)\eps^2}{a^2}\leq\frac{4l_0}{a^2}+\frac{\eps^2}{a^2}<\eta
	$$
	if $\frac{4 l_0}{a^2}<\frac{\eta}{2}$ and $\frac{\eps^2}{a^2}<\frac{\eta}{2}$. This says that the probability that Player II wins $\frac{a}{2\eps}$ more times than Player I is small.
	 Then, we deduce that
	$$
	\P(|x_{\tau}-x_0|\geq\frac{a}{2})<\eta.
	$$
	Here we use that the maximum we can get away from $x_0$ is $\eps$ each step. Now, if we take $r_0<\frac{a}{2}$, we obtain
	$$
	|x_{\tau}-y|\leq |x_{\tau}-x_0|+|x_0-y|<|x_{\tau}-x_0|+\frac{a}{2}.
	$$
	Hence, we have
	$$
	\Big\{|x_{\tau}-y|\geq a \Big\}\subseteq \Big\{|x_{\tau}-x_0|\geq \frac{a}{2}\Big\},
	$$
	and then we conclude that
	$$
	\P({|x_{\tau}-y|\geq a})<\eta.
	$$ 
	This ends the proof.
\end{proof}

\subsection{Estimates for the Random Walk game}

Here we assume that we are playing on board 2, with the random walk game without changing time.
The estimates for this game follow the same ideas as before, and are even simpler since there are no strategies 
of the players involved in this case. We include some ideas for completeness and refer to
\cite{nosotros} for more details. Recall that in this board the time $t$ does not change
when we play.

\begin{Lemma}
	Given $\eta>0$ and $a>0$, there exists $r_{0}>0$ and $\eps_{0}>0$ such that, given $(y,s)\in\big[\partial\Om\times (0,T)\big]$ and $x_{0}\in\Om$ with $|x_{0}-y|<r_{0}$, if we play random in $\Om\times \{ s \}$ we obtain 
	$$\mathbb{P} \Big(|x_{\tau}-y|< a \Big) \geq 1 - \eta
	\qquad \mbox{and} \qquad\mathbb{P} \Big(\tau \geq \frac{a}{2\eps^2} \Big)< \eta$$
	for $\eps<\eps_{0}$ and $(x_{\tau},s)$ the first position outside $\Om\times (0,T]$.
\end{Lemma}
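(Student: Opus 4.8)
The plan is to mimic the proof of Lemma \ref{lema.estim.ToW}, replacing the ``point to $y$'' strategy (which is unavailable here, since on the second board no player chooses the next position) by a fixed barrier built from the uniform exterior sphere condition on $\partial\Om$ assumed in Theorem \ref{teo.converge2}. Since on the second board the time variable never changes, we have $t_\tau=s$ identically, so there is nothing to prove for the time coordinate and only one situation (a genuine spatial boundary point $y\in\partial\Om$) occurs. Fix such a $y$ and let $B_\rho(z_0)$ be the exterior tangent ball, so that $\overline{\Om}\cap B_\rho(z_0)=\{y\}$, $|y-z_0|=\rho$ and $|x-z_0|\ge\rho$ for every $x\in\Om$. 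Choosing an exponent $\gamma>N-2$, I would introduce the barrier
\[
\Psi(x)=\rho^{-\gamma}-|x-z_0|^{-\gamma},
\]
which satisfies $\Psi\ge 0$ on $\overline{\Om}$, $\Psi(y)=0$, and, since $\Delta(|x-z_0|^{-\gamma})=\gamma(\gamma+2-N)|x-z_0|^{-\gamma-2}>0$, has $-\Delta\Psi\ge\lambda>0$ uniformly on $\overline{\Om}$ (note $z_0$ stays at distance $\ge\rho$ from $\overline{\Om}$, so $\Psi$ is smooth on a neighborhood of $\overline{\Om}$).

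The key step is to turn $\Psi$ into a supermartingale along the random walk. Because at each play $x_{k+1}$ is uniformly distributed in $B_\eps(x_k)$, a second order Taylor expansion gives
\[
\kint_{B_\eps(x)}\Psi(z)\,dz=\Psi(x)+\frac{\eps^2}{2(N+2)}\Delta\Psi(x)+O(\eps^4),
\]
so that for $\eps<\eps_0$ small there is $\mu>0$ with $\kint_{B_\eps(x)}\Psi\,dz\le\Psi(x)-\mu\eps^2$ for all $x\in\Om$. Hence $M_k:=\Psi(x_k)$ is a bounded supermartingale and $W_k:=\Psi(x_k)+\mu\eps^2 k$ is a supermartingale as well. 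Applying the \textit{OSTh} exactly as in the previous lemma (using $\tau\wedge m$, the a.s.\ finiteness of $\tau$, and Fatou/monotone convergence) yields
\[
\E[\Psi(x_\tau)]\le\Psi(x_0)\qquad\text{and}\qquad\mu\eps^2\,\E[\tau]\le\Psi(x_0)-\E[\Psi(x_\tau)].
\]
Since $|x_0-y|<r_0$ implies $|x_0-z_0|\le\rho+r_0$, the mean value theorem gives the crucial smallness $\Psi(x_0)\le\gamma\rho^{-\gamma-1}r_0=:C_1 r_0$.

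It remains to read off the two probabilities from the exit value $\Psi(x_\tau)$. As $x_{\tau-1}\in\Om$ and $|x_\tau-x_{\tau-1}|\le\eps$, we always have $|x_\tau-z_0|\ge\rho-\eps$ and therefore $\Psi(x_\tau)\ge-C_2\eps$. On the other hand, the geometry of the tangent sphere shows that exiting far from $y$ forces $|x_\tau-z_0|$ to exceed $\rho$ by a definite amount: if $|x_\tau-y|\ge a$ then $|x_\tau-z_0|\ge\rho+c(a)$ with $c(a)>0$ depending only on $a$ and $\rho$ (uniform in $y$ by the uniform exterior sphere condition), whence $\Psi(x_\tau)\ge\beta(a):=\rho^{-\gamma}-(\rho+c(a))^{-\gamma}>0$. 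Combining these with the martingale bounds, for the position I would write
\[
\beta(a)\,\P\big(|x_\tau-y|\ge a\big)\le\E[\Psi(x_\tau)^+]\le\E[\Psi(x_\tau)]+C_2\eps\le C_1 r_0+C_2\eps,
\]
and for the number of plays, by Markov's inequality,
\[
\P\Big(\tau\ge\frac{a}{2\eps^2}\Big)\le\frac{2\eps^2\,\E[\tau]}{a}\le\frac{2\big(C_1 r_0+C_2\eps\big)}{\mu a}.
\]
Choosing first $r_0$ small and then $\eps_0$ small makes both right hand sides smaller than $\eta$, which is exactly the assertion (recall $\P(|x_\tau-y|<a)=1-\P(|x_\tau-y|\ge a)$). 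The main obstacle is the construction and uniform control of the barrier: proving the averaged superharmonicity $\kint_{B_\eps(x)}\Psi\le\Psi-\mu\eps^2$ uniformly in $x\in\Om$ and, above all, the geometric gap $|x_\tau-y|\ge a\Rightarrow|x_\tau-z_0|\ge\rho+c(a)$ with $c(a)$ independent of the boundary point $y$. Once these are in hand the $O(\eps)$ slack coming from the discrete steps is harmless, because it is multiplied by $\eps^2$ in the Markov estimate and thus vanishes in the limit.
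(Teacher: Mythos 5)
Your proof is correct and belongs to the same family as the paper's: both build a radial barrier from the exterior sphere condition, turn it into a (super)martingale along the random walk via a mean value computation, and conclude with the optional stopping theorem plus Markov's inequality. The difference is the barrier. The paper takes the \emph{harmonic} function $\mu(x)=\theta^{2-N}-|x-z_y|^{2-N}$, so that $\mu(x_k)$ is an exact martingale (this gives the exit--position estimate), and then bounds $\E[\tau]$ by a separate argument using that $\mu^2(x_k)$ is a quantitative submartingale, $\E[\mu^2(x_{k+1})-\mu^2(x_k)\,\arrowvert\, x_k]\geq\sigma\eps^2$, which comes from $\Delta(\mu^2)=2|\nabla\mu|^2>0$; it also assumes $N\geq3$ and treats $N=1,2$ separately. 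Your strictly superharmonic barrier $\Psi$ (any $\gamma>N-2$) packages both estimates into the single drift supermartingale $W_k$, works in every dimension with one formula, and replaces the exact mean value property by the asymptotic expansion, at the harmless price of a uniform $O(\eps^4)$ error ($\Psi$ is smooth on an $\eps_0$-neighborhood of $\overline\Om$ since $z_0$ stays at distance $\rho$ from $\overline\Om$). Concerning the step you flag as the main obstacle: the paper needs exactly the same geometric fact (it is the contrapositive of its claim ``$\mu(x_\tau)<b\Rightarrow|x_\tau-y|<a$'', which is likewise only asserted), and it can be proved uniformly in $y$ by the half-radius trick. If $B_\rho(z)$ is the exterior ball at $y$ given by the uniform condition, work with the ball of radius $\rho'=\rho/2$ centered at $z_0=y+\rho'\nu$, $\nu=(z-y)/\rho$ (this also guarantees the touching point is unique). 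For $x\in\overline\Om$ the inclusion $B_\rho(z)\cap\Om=\emptyset$ gives $2\rho\langle x-y,\nu\rangle\leq|x-y|^2$, hence
$$
|x-z_0|^2=|x-y|^2-2\rho'\langle x-y,\nu\rangle+(\rho')^2\geq\tfrac12|x-y|^2+(\rho')^2,
$$
so $|x-y|\geq a$ forces $|x-z_0|\geq\sqrt{(\rho')^2+a^2/2}=\rho'+c(a)$ with $c(a)>0$ independent of $y$; the exit point $x_\tau\notin\Om$ is then handled through $x_{\tau-1}\in\Om$ at a cost $O(\eps)$, exactly as in your bound $\Psi(x_\tau)\geq-C_2\eps$. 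With these two routine verifications filled in, your argument is complete.
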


\begin{proof}
We include only a skecht of the proof. Extra details can be found in \cite{nosotros}. Assume that $ N \geq 3 $
(the cases $N=1,2$ are similar).
The first step is, given $ \theta <\theta_{0} $, and $ y \in \Om $ we are going to assume that $ z_{y} = 0 $ is chased so that we have 
$ \ol {B_{\theta}(0)}  \cap \ol {\Om} = \{ y \}$. We define the set $\Om_{\eps}=\{x\in\R^{N}:d(x,\Om)<\eps\}$
for $ \eps $ small enough. Now, we consider the function $\mu:\Om_{\eps}\rightarrow\R$ given by 
\begin{equation}
\label{mu}
\mu(x)=\frac{1}{\theta^{N-2}}-\frac{1}{|x |^{N-2}}.
\end{equation}
This function is positive in $ \ol{\Om}\backslash \{ y \} $,
radially increasing and harmonic in $ \Om $. Also it holds that $ \mu (y) = 0 $.

Take the first position of the game, $ x_{0} \in \Om $, such that $ |x_{0} -y |<r_{0} $ with $ r_{0} $ 
to be choosen later. Let $ (x_{k})_{k \geq 0} $ be the sequence of positions of the game playing random walks
and
consider the sequence of random variables
$$ N_{k}=\mu(x_{k})$$
for $ k \geq 0 $. Since $ \mu $ is harmonic, we have that $ N_{k} $ is a martingale,
$$ \E [N_{k + 1} \arrowvert N_{k}] = \kint_{B _{\eps} (x_{k})} \mu(y) dy = \mu (x_{k}) = N_{k}. $$
Since $ \mu $ is bounded in $\Omega$, 
the third hypothesis of {\it OSTh} is fulfilled, hence we obtain
\begin{equation}
\label{muxo}
\E [\mu(x_{\tau})] = \mu(x_ {0}).
\end{equation}
We have the following estimate for $\mu(x_{0})$: there exists a constant $c(\Om,\theta)>0$ such that
$$
\mu(x_0)\leq c(\Om,\theta)r_0.
$$
Now we need to establish a relation between $ \mu (x_{\tau}) $ and $ |x_{\tau} -y |$. 
To this end, we take the function $ b: [\theta, + \infty) \rightarrow \R $ given by
\begin{equation}
\label{funb}
b(\ol{a})=\frac{1}{\theta^{N-2}}-\frac{1}{\ol{a}^{N-2}}.
\end{equation}
Note that this function is the radial version of $ \mu $. It is positive and increasing,
then, it has an inverse (also increasing) that is given by the formula
$$ \ol{a}(b)=\frac{\theta}{(1-\theta^{N-2}b)^{\frac{1}{N-2}}}.$$
With this function we can get the following result:
given $a>0$, exist $\ol{a}>\theta$, $b>0$ and $\eps_{0}>0$ such that  
$$
\mbox{if } \mu(x_{\tau})<b \Rightarrow |x_{\tau}-y|<a \ , \ d(x_{\tau},\Om)<\eps_{0}.
$$
Then, we have
$$\P(\mu(x_{\tau})\geq b)\geq \P(|x_{\tau}-y|\geq a).$$
and then we obtain  
\begin{equation}
\label{desnorma}
\P(|x_{\tau}-y|\geq a)<\eta
\end{equation}
if $r_0$ is small.
Now, let us compute
\begin{equation}
\label{ENK}
\E[N_{k+1}^{2}-N_{k}^{2}\arrowvert N_{k}]=\kint_{B_{\eps}(x_{k})}(\mu^{2}(w)-\mu^{2}(x_{k}))dw.
\end{equation}
Using the Taylor expansion of order two we can prove that
$$
 \E[N_{k+1}^{2}-N_{k}^{2}\arrowvert N_{k}]\geq \sigma(\Om)\eps^{2}.
 $$
 Then, with arguments similar to those used above we obtain
$$
\P \Big(\tau\geq\frac{a}{2\eps^{2}}\Big)<\eta
$$
for $r_{0}$ small enough.
\end{proof} 

Now we are ready to prove the second condition in the Arzela-Ascoli type lemma.

\begin{Lemma}\label{lem.ascoli.arzela.asymp} Given $\delta>0$ there are 
	$r_0>0$ and $\eps_0>0$ such that for every $0< \eps < \eps_0$
	and any $x, y \in \overline{\Omega}$ with $|x - y | < r_0 $ and $|t-s|< r_0$
	it holds
	$$
	|u^\eps (x,t) - u^\eps (y,s)| < \delta \qquad \mbox{and} \qquad |v^\eps (x,t) - v^\eps (y,s)| < \delta.
	$$
\end{Lemma}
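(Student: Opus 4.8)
The plan is to deduce the asymptotic equicontinuity in space and time from the two boundary estimates just established (Lemma~\ref{lema.estim.ToW} and its random-walk analogue in the previous subsection), combined with a coupling of two copies of the game. I would split the argument into two regimes: the case in which one of the two points is near the parabolic boundary $\partial_p\Om_T$, where the boundary estimates apply directly, and the general interior case, which I reduce to the boundary case by running two coupled games. Since $u^\eps$ is the value on board~$1$ and $v^\eps$ the value on board~$2$, the same coupling will produce the estimate for both components at once.

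First I would treat the \emph{boundary regime}. Suppose $(x_0,t_0)\in\Om_T$ lies within $r_0$ in space and $l_0$ in time of a point $(y,s)\in\partial_p\Om_T$. By Lemma~\ref{lema.estim.ToW} either player has a strategy forcing $\P(|x_\tau-y|<a)\ge 1-\eta$ and $\P(|t_\tau-s|<a)\ge 1-\eta$. Splitting the expectation in \eqref{eq:defi-expectation} over this good event and its complement, bounding the payoff on the complement by the uniform constant $C$ of Lemma~\ref{lem.ascoli.arzela.acot}, and using that $\ol f,\ol g,u_0$ are Lipschitz, I obtain that the value differs from the boundary datum ($\ol f(y,s)$, $\ol g(y,s)$, or $u_0(y)$ when $s=0$) by at most $L\,a+2C\eta$, which is $<\delta$ once $a,\eta$ are small. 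Because the bound holds whichever player uses the pointing strategy, it controls the inf--sup value, and the random-walk estimate gives the analogous statement for $v^\eps$.

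For the \emph{interior regime}, given $(x,t),(y,s)$ with $|x-y|<r_0$ and $|t-s|<r_0$, I would couple two copies of the game, one started at $(x,t)$ and one at $(y,s)$, using the same coin tosses, the same uniform random displacement vectors on board~$2$, and, crucially, the same board-switch decisions; the near-optimal Tug-of-War strategy of one game is transported to the other by the fixed translation $x-y$. While both tokens remain in $\Om_T$ this preserves the spatial displacement $x-y$ (unchanged by a common displacement or by translating the winner's choice) and the time displacement $t-s$ (both copies lose $\eps^2$ simultaneously on board~$1$ and neither changes time on board~$2$). I run the coupled pair until the first token exits; at that instant the second token is within $|x-y|$ of $\partial\Om$, or within $|t-s|$ of time $0$, hence near $\partial_p\Om_T$. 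The boundary regime then forces the lagging game to end near the same exit point, in $O(a/\eps^2)$ further steps by the bound $\P(\tau\ge a/(2\eps^2))<\eta$, so both games terminate at spatial points within $|x-y|+a$ and times within $|t-s|+a$, outside an event of probability $\le\eta$. The Lipschitz bounds on the data yield
$$ |u^\eps(x,t)-u^\eps(y,s)| \le L\big(|x-y|+|t-s|+2a\big)+2C\eta, $$
and the same for $v^\eps$; choosing $a,\eta,r_0,l_0$ small then gives the lemma.

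The hard part will be maintaining this coupling coherently across the two boards: a board switch (probability $\eps^2$) must be synchronized in both copies so that they always sit on the same board at the same coupled step, and one must control the spatial \emph{and} temporal displacements simultaneously while transporting near-optimal strategies under the translation. Since the board changes link $u^\eps$ and $v^\eps$, the two estimates must be closed off together — this is exactly the simultaneous space--time estimate for a coupled system flagged in the introduction. The role of the bound $\P(\tau\ge a/(2\eps^2))<\eta$ is precisely to keep the finishing phase of the lagging game short, so that the extra time it accumulates is only $O(a)$ and the temporal displacement stays under control.
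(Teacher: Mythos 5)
Your proposal follows the same route as the paper: the identical two-regime decomposition (boundary estimates from Lemma \ref{lema.estim.ToW} and its random-walk analogue, then a translation coupling with synchronized coin tosses, random moves and board switches for interior pairs), so in structure it matches the paper's proof.

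There is, however, one concrete gap in your boundary regime. Lemma \ref{lema.estim.ToW} is proved under the standing assumption that the token plays \emph{permanently} on board 1; its probability bounds live on the probability space of the pure Tug-of-War game and do not apply verbatim to the two-board game whose value is $u^\eps$, because at every play there is probability $\eps^2$ of jumping to board 2, where the pointing strategy is unavailable, the motion is random, and the clock stops. Consequently your claimed estimate $|u^\eps(x_0,t_0)-w(y,s)|\leq L\,a+2C\eta$ cannot be obtained by ``splitting the expectation over the good event'' alone. The paper repairs exactly this point by introducing the event $F=\big\{\mbox{the position does not change board in the first } \lceil a/(2\eps^{2})\rceil \mbox{ plays and } \tau<\lceil a/(2\eps^{2})\rceil\big\}$, on which the full game coincides with the pure Tug-of-War game, and estimating $\P(F^{c})\leq (1-e^{-a/2})+2\eta$ via $(1-\eps^{2})^{a/(2\eps^{2})}\nearrow e^{-a/2}$; this adds a term of order $C\big[(1-e^{-a/2})+2\eta\big]$ to your bound. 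The extra term still vanishes as $a\to 0$, so your choice of parameters survives and the argument closes, but without this conditioning the invocation of Lemma \ref{lema.estim.ToW} is not justified. The same caveat applies at the end of your coupling argument, where you again apply the boundary estimates to the lagging game: there too the estimate you feed in must be the one already corrected for board switches.
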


\begin{proof} We deal with the estimate for $u^\eps$. 
	Recall that $u^{\eps}$ is the value of the game playing in the first board (where we play Tug-of-War).

	First, we start with two close points $(x,t)$ and $(y,s)$ with $(y,s)\in \partial_p \Omega_T$ and $(x,t)\in \Omega_T$. 
Let define the following function $w:[(\R^N\backslash\Om\times (0,T])\cup(\R^N\times\{0\})]\rightarrow\R$,
$$
w(x,t) =\left\lbrace
\begin{array}{ll}
\ol{f}(x,t) &  \ \ \mbox{if} \quad t\geq 0, x\notin\Om,  \\[10pt]
\displaystyle u_0(x) &  \ \ \mbox{if} \quad t=0, x\in\Om.
\end{array}
\right.
$$
From our conditions on the data, the function $w$ is well defined and is Lipschitz in both variables, that is $$|w(x,t)-w(y,s)|\leq L(|x-y|+|t-s|).$$ 
For instance, notice that for $t>0$ and $s=0$ we have 
$$
|\ol{f}(x,t)-u_0(y)|\leq L(|x-y|+|t|).
$$
	Given $\eta >0$ and $a>0$, we have $r_{0}$, $l_0$, $\eps_{0}$ and $S^{*}_{I}$ the strategy as in Lemma \ref{lema.estim.ToW}.
	Let 
	$$ F=\Big\{\mbox{the position does not change board in the first } \ \lceil \frac{a}{2\eps^{2}}\rceil \mbox{ plays and } \tau < \lceil \frac{a}{2\eps^{2}}\rceil \Big\}.$$
	
	We consider two cases.
	
	\textbf{1st case:} We are going to show that $u^{\eps}(x_{0},t_0)-w(y,s) \geq - A(a,\eta)$ with $A(a,\eta)\searrow 0$ if $a\rightarrow 0$ and 
	$\eta\rightarrow 0$.
	We have
	$$u^{\eps}(x_{0},t_0)\geq \inf_{S_{II}}\E^{(x_{0},t_0}_{S^{*}_{I},S_{II}}[h(x_{\tau},t_{\tau})].$$
	Now, using probality properties we obtain
	$$
	\begin{array}{l}
	\displaystyle 
	\E^{(x_{0},t_0)}_{S^{*}_{I},S_{II}}[ h (x_{\tau},t_{\tau})]  = \E^{(x_{0},t_0)}_{S^{*}_{I},S_{II}}[h(x_{\tau},t_{\tau})\arrowvert F]\P(F)
	+  \E^{(x_{0},t_0)}_{S^{*}_{I},S_{II}}[h(x_{\tau},t_{\tau})\arrowvert F^{c}]\P(F^{c})
	\\[10pt]
	\qquad \displaystyle \geq \E^{(x_{0},t_0)}_{S^{*}_{I},S_{II}}[w(x_{\tau},t_{\tau})\arrowvert F]\P(F)-\max\{\lvert\ol{f}|,\lvert\ol{g}|,|u_0|\}\P(F^{c}).
	\end{array}
	$$
	Now we estimate $\P(F)$ and $\P(F^{c})$. We have that  
	$$\P(F^{c})\leq \P \Big(\mbox{the game changes board before }\lceil \frac{a}{2\eps^{2}}\rceil \mbox{ plays}\Big)
	+\P(\tau\geq\lceil \frac{a}{2\eps^{2}}\rceil).$$
	Hence we are left with two bounds. First, we have 
	\begin{equation}
	\label{Ac1}
	\P \Big(\mbox{the game changes board before }\lceil \frac{a}{2\eps^{2}}\rceil \mbox{ plays} \Big)=1-(1-\eps^{2})^{\frac{a}{2\eps^{2}}} \leq (1-e^{-a/2})+\eta
	\end{equation}
	for $\eps$ small enough.
	Here we are using that $(1-\eps^{2})^{\frac{a}{2\eps^{2}}}\nearrow e^{-a/2}$.
	Next, we observe that using Lemma \ref{lema.estim.ToW} we get
	\begin{equation}
	\label{Ac2}
	\P\Big(\tau \geq \frac{a}{2\eps^{2}}\Big) \leq \P\Big(\tau \geq \frac{a}{2\eps_{0}^{2}}\Big)< \eta,
	\end{equation}
	for $\eps < \eps_{0}$.
	From \eqref{Ac1} and \eqref{Ac2} we obtain
	$$ \P(F^{c})\leq (1-e^{-a/2})+\eta +\eta= (1-e^{-a/2})+2\eta $$
	and hence
	$$ \P(F) =1-\P(F^{c}) \geq 1-[(1-e^{-a/2})+2\eta] .$$
	Then we obtain
	\begin{equation}
	\label{arriba}
	\begin{array}{l}
	\displaystyle  \E^{(x_{0},t_0)}_{S^{*}_{I},S_{II}}[h(x_{\tau},t_{\tau})] 
	\displaystyle \geq \E^{(x_{0},t_0)}_{S^{*}_{I},S_{II}}[w(x_{\tau},t_{\tau})\arrowvert F] (1-[(1-e^{-a/2})+2\eta])
	\\[10pt]
	\qquad \qquad  \qquad  \qquad  \qquad \qquad \displaystyle -\max\{\lvert\ol{f}|,\lvert\ol{g}|,|u_0|\}[(1-e^{-a/2})+2\eta] .
	\end{array}
	\end{equation}
	Let us analyze the expected value $\E^{(x_{0},t_0)}_{S^{*}_{I},S_{II}}[w(x_{\tau},t_{\tau})\arrowvert F]$. 
	Again we need to consider two events,
	$$ F_{1}=F\cap [\{ |x_{\tau}-y|< a\}\cap\{ |t_{\tau}-s|<a\}] \qquad \mbox{and} \qquad F_{2}=F\cap F_{1}^{c}.$$
	We have that $ F=F_{1}\cup F_{2}$.
	Then 
	\begin{equation}
	\label{retomo}
	\E^{(x_{0},t_0)}_{S^{*}_{I},S_{II}}[w(x_{\tau},t_{\tau})\arrowvert F]=\E^{(x_{0},t_0)}_{S^{*}_{I},S_{II}}[w(x_{\tau},t_{\tau})\arrowvert F_{1}]\P(F_{1})+\E^{(x_{0},t_0)}_{S^{*}_{I},S_{II}}[w(x_{\tau},t_{\tau})\arrowvert F_{2}]\P(F_{2}).
	\end{equation}
	Now we have that
	\begin{equation}
	\label{a2}
	\begin{array}{l}
	\displaystyle  \P(F_{2})\leq \P([\{ |x_{\tau}-y|< a\}\cap\{ |t_{\tau}-s|<a\}]^c)= P(\{ |x_{\tau}-y|\geq a\}\cup \{ |t_{\tau}-s|\geq a\}) \\[10pt]
	\qquad  \qquad \leq \P(|x_{\tau}-y|\geq a)+\P(|t_{\tau}-s|\geq a)<2\eta.
	\displaystyle  
	\end{array}
	\end{equation}
	To get a bound for the other case we observe that
	$ F_{1}^{c}=F^{c}\cup  \{ |x_{\tau}-y|\geq a\}\cup \{|t_{\tau}-s|\geq a\}$.
	Therefore 
	$$ \P(F_{1})=1-\P(F_{1}^{c})\geq 1-[\P(F^{c})+\P(|x_{\tau}-y|\geq a)+\P(|t_{\tau}-s|\geq a)],$$
	and we arrive to
	\begin{equation}
	\label{a1}
	\P(F_{1})\geq 1-[(1-e^{-a})+2\eta+\eta+\eta]=1-[(1-e^{-a})+4\eta].
	\end{equation}
	If we go back to \eqref{retomo} and use \eqref{a1} and \eqref{a2} we get 
	\begin{equation}
	\label{retomo2}
	\E^{x_{0}}_{S^{*}_{I},S_{II}}[w(x_{\tau},t_{\tau})\arrowvert F]\geq\E^{x_{0}}_{S^{*}_{I},S_{II}}[w(x_{\tau},t_{\tau})\arrowvert F_{1}](1-[(1-e^{-a})+4\eta])-\max \{|\ol{f}|,|u_0|\}2\eta .
	\end{equation} 
	Using that $w$ is Lipschitz we obtain 
	$$ w(x_{\tau},t_{\tau})\geq w(y,s)-L(|x_{\tau}-y|+|t_{\tau}-s|)\geq w(y,s)-2La ,$$
	and then, using that $(w(y,s)-2La)$ does not depend on the strategies, we conclude that 
	\begin{equation}
	\E^{x_{0}}_{S^{*}_{I},S_{II}}[w(x_{\tau},t_{\tau})\arrowvert F]\geq(w(y,s)-2La)(1-[(1-e^{-a})+4\eta])-\max \{\lvert\ol{f}|,|u_0|\}2\eta.
	\end{equation}
	Recalling \eqref{arriba} we obtain
	$$
	\begin{array}{l}
	\displaystyle  \E^{x_{0}}_{S^{*}_{I},S_{II}}[h ( x_{\tau},t_{\tau})] \displaystyle \\[10pt]
	\quad  \geq ((w(y,s)-2La)(1-[(1-e^{-a})+4\eta]) -\max \{\lvert\ol{f}|,|u_0|\}2\eta ) (1-[(1-e^{-a})+2\eta]) 
	\displaystyle  \\[10pt] \qquad  -\max\{\lvert\ol{f}|,\lvert\ol{g}|,|u_0|\}[(1-e^{-a})+2\eta].
	\end{array}
	$$ 
	Notice that when $\eta \rightarrow 0$ and $a\rightarrow 0$ the the right hand side
	goes to $w(y,s)$, hence we have obtained
	$$ \E^{x_{0}}_{S^{*}_{I},S_{II}}[h( x_{\tau},t_{\tau})]\geq w(y,s)- A(a,\eta)$$
    Taking the infimum over all possible strategies $S_{II}$ and then supremum over $S_{I}$ we get 
	$$ u^{\eps}(x_{0},t_0)\geq w(y,s)- A(a,\eta)$$ 
	with $A(a,\eta)\to 0$ as $\eta\rightarrow 0$ and $a\rightarrow 0$ as we wanted to show.
	
	\textbf{2nd case:} Now we want to show that $$u^{\eps}(x_{0},t_0)-w(y,s)\leq B(a,\eta)$$ with $B(a,\eta)\searrow 0$ as $\eta\rightarrow 0$ and 
	$a\rightarrow 0$.
	In this case we just use the strategy $S^*$ from Lemma \ref{lema.estim.ToW} as the strategy for the second player
	$S^{*}_{II}$ and we obtain
	$$ u^{\eps}(x_{0},t_0)\leq \sup_{S_{II}}\E^{x_{0}}_{S_{I},S^{*}_{II}}[h(x_{\tau},t_{\tau})].$$
	Using again the set $F$ that we considered in the previous case we obtain
	$$ \E^{x_{0}}_{S_{I},S^{*}_{II}}[ h( x_{\tau},t_{\tau})]= \E^{x_{0}}_{S_{I},S^{*}_{II}}[w( x_{\tau},t_{\tau})\arrowvert F]\P(F)+ 
	\E^{x_{0}}_{S_{I},S^{*}_{II}}[h( x_{\tau},t_{\tau})\arrowvert F^{c}]\P(F^{c}).$$
	We have that $\P(F) \leq 1$ and $\P(F^{c})\leq (1-e^{-a/2})+2\eta $. Hence we get
	\begin{equation}
	\label{retomo3}
	\E^{x_{0}}_{S_{I},S^{*}_{II}}[h( x_{\tau},t_{\tau})]\leq \E^{x_{0}}_{S_{I},S^{*}_{II}}[w( x_{\tau},t_{\tau})\arrowvert F]+\max\{\lvert\ol{f}|,\lvert\ol{g}|,|u_0|\}[(1-e^{-a/2})+2\eta].
	\end{equation}
	To bound $\E^{x_{0}}_{S_{I},S^{*}_{II}}[w(x_{\tau},t_{\tau})\arrowvert F]$ we will use again the sets $F_{1}$ and $F_{2}$ 
	as in the previous case. We have
	$$ \E^{x_{0}}_{S_{I},S^{*}_{II}}[w(x_{\tau},t_{\tau})\arrowvert F]=\E^{x_{0}}_{S_{I},S^{*}_{II}}[w(x_{\tau},t_{\tau})\arrowvert F_{1}]\P(F_{1})+\E^{(x_{0},t_0)}_{S_{I},S^{*}_{II}}[w(x_{\tau},t_{\tau})\arrowvert F_{2}]\P(F_{2}).$$
	Now we use that $\P(F_{1}) \leq 1$ and $\P(F_{2})\leq c\eta$ to obtain
	$$ \E^{(x_{0},t_0)}_{S_{I},S^{*}_{II}}[w(x_{\tau},t_{\tau})\arrowvert F]\leq \E^{x_{0}}_{S_{I},S^{*}_{II}}[w(x_{\tau},t_{\tau})\arrowvert F_{1}]+\max\{\lvert\ol{f}|,|u_0|\}2\eta .$$
    Using that $w$ is Lipschitz and that $(w(y,s)+2La)$ does not depend on the strategies we get
	$$ \E^{x_{0}}_{S_{I},S^{*}_{II}}[w(x_{\tau},t_{\tau})\arrowvert F]\leq \E^{x_{0}}_{S_{I},S^{*}_{II}}[w(y,s)+2La\arrowvert F_{1}]
	+\max\{\lvert\ol{f}|\}2\eta \leq(w(y,s)+2La)+\max\{\lvert\ol{f}|\}2\eta ,$$
	and therefore we conclude that 
	$$ \E^{x_{0}}_{S_{I},S^{*}_{II}}[h( x_{\tau},t_{\tau} )]\leq w(y,s)+2La+\max\{\lvert\ol{f}|\}2\eta + \max\{\lvert\ol{f}|,\lvert\ol{g}|\}[(1-e^{-a/2})+2\eta].
	$$
	We have proved that 
	$$ \E^{x_{0}}_{S_{I},S^{*}_{II}}[ h( x_{\tau},t_{\tau})]\leq w(y,s) + B(a,\eta) $$
	with $B(a,\eta)\to 0$. Taking supremum over the strategies for Player I we obtain 
	$$ u^{\eps}(x_{0},t_0)\leq w(y,s)+B(a,\eta)$$
	with $B(a,\eta)\rightarrow 0$ as $\eta\rightarrow 0$ and $a\rightarrow 0$. 
	
We conclude that 
	$$ |u^{\eps}(x_{0},t_0)-w(y,s)|< \max\{A(a,\eta),B(a,\eta)\},$$
	that holds when $(y,s)\in\partial_p \Om_T$ and $(x_0,t_0)\in\Om_T$ is close to $(y,s)$.
	
	An analogous estimate holds for $v^\eps$. The details are simpler and left to the reader.  
	
	Now, given two points $(x_0,t_0)$ and $(z_0,s_0)$ inside $\Omega$ with $|x_0-z_0|<r_0$ and $|t_0-s_0|<r_0$ we couple the  
	game starting at $(x_0,t_0)$ with the game starting at $(z_0,s_0)$ making the same movements and also
	changing board simultaneously. This coupling generates two sequences of positions $(x_i,t_i,j_i)$ and $(z_i,s_i,k_i)$
	such that $|x_i - z_i|<r_0$, $|t_i - s_i|<l_0$ and $j_i=k_i$ (since they change boars at the same time both games are at
	the same board at every turn). This continues until one of the games exits the domain (say at $(x_\tau,t_{\tau}) \not\in \Omega\times (0,T)$).
	At this point for the game starting at $(z_0,s_0)$ we have that its position $(z_\tau,s_{\tau})$ is close to the exterior point $(x_\tau,t_{\tau}) \not\in \Omega\times (0,T)$ (since we
	have $|x_\tau - z_\tau|<r_0$ and $|t_\tau - s_\tau|<r_0$) and hence we can use our previous estimates for points close to the boundary to conclude that 
	$$ |u^{\eps}(x_{0},t_0)- u^\eps (z_0,s_0)|< \delta, \qquad \mbox{ and }
	\qquad |v^{\eps}(x_{0},t_0)- v^\eps (z_0,s_0)|< \delta. $$
	This ends the proof.
\end{proof}

As a consequence, we have convergence of $(u^{\eps},v^{\eps})$ as $\eps \to 0$ along subsequences. 

\begin{theorem} \label{teo.conv.unif}
	Let $(u^{\eps},v^{\eps})$ be solutions to the DPP, then there exists a subsequence
	$\eps_k \to 0$ and a pair on functions $(u,v)$ continuous in $\overline{\Omega}$ such that  
	$$
	u^{\eps_k} \to u, \qquad \mbox{ and } \qquad  v^{\eps_k} \to v, 
	$$ 
	uniformly in  $\overline{\Omega}\times [0,T]$.
\end{theorem}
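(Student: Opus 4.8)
The plan is to deduce this theorem directly from the Arzel\`{a}--Ascoli type lemma (Lemma \ref{lem.ascoli.arzela}), by verifying its two hypotheses for each component of the pair $(u^\eps,v^\eps)$ and then extracting a single subsequence along which both components converge.

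First I would check the uniform bound. This is exactly the content of Lemma \ref{lem.ascoli.arzela.acot}: there is a constant $C>0$, independent of $\eps$, with $|u^\eps(x,t)|\leq C$ and $|v^\eps(x,t)|\leq C$ for every $\eps>0$ and every $(x,t)\in\overline{\Omega}\times[0,T]$. Thus both families satisfy hypothesis (1) of Lemma \ref{lem.ascoli.arzela}. Next I would verify the asymptotic equicontinuity, which is precisely Lemma \ref{lem.ascoli.arzela.asymp}: given $\delta>0$ there are $r_0>0$ and $\eps_0>0$ such that for $0<\eps<\eps_0$ and any $x,y\in\overline{\Omega}$ with $|x-y|<r_0$ and any $t,s$ with $|t-s|<r_0$ one has $|u^\eps(x,t)-u^\eps(y,s)|<\delta$ and $|v^\eps(x,t)-v^\eps(y,s)|<\delta$. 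Hence hypothesis (2) of Lemma \ref{lem.ascoli.arzela} also holds for both $\{u^\eps\}$ and $\{v^\eps\}$.

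With both hypotheses verified, I would apply Lemma \ref{lem.ascoli.arzela} first to $\{u^\eps\}$ to obtain a subsequence $\eps_k\to 0$ and a uniformly continuous function $u\colon\overline{\Omega}\times[0,T]\to\R$ with $u^{\eps_k}\to u$ uniformly. Since the family $\{v^{\eps_k}\}$, now indexed along $\eps_k$, still satisfies both hypotheses, a second application of Lemma \ref{lem.ascoli.arzela} yields a further subsequence, which I again denote $\eps_k$, together with a uniformly continuous limit $v$ such that $v^{\eps_k}\to v$ uniformly; passing to this nested subsequence does not spoil the convergence $u^{\eps_k}\to u$ already obtained. Along this common subsequence both $u^{\eps_k}\to u$ and $v^{\eps_k}\to v$ uniformly in $\overline{\Omega}\times[0,T]$, and the two limits are continuous, as required.

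The only point requiring care---rather than a genuine obstacle---is the extraction of a single subsequence valid for both components simultaneously; this is handled by the standard nested-subsequence argument above, using that a subsequence of a sequence obeying the Arzel\`{a}--Ascoli hypotheses still obeys them. All the substantive work has already been carried out in Lemmas \ref{lem.ascoli.arzela.acot} and \ref{lem.ascoli.arzela.asymp}, where the uniform bound and the delicate asymptotic equicontinuity (first up to the parabolic boundary $\partial_p\Om_T$, and then propagated into the interior via the coupling argument) are established.
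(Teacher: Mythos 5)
Your proposal is correct and follows essentially the same route as the paper: the paper's proof simply cites Lemma \ref{lem.ascoli.arzela.acot} and Lemma \ref{lem.ascoli.arzela.asymp} to verify the hypotheses of Lemma \ref{lem.ascoli.arzela} and concludes. Your explicit nested-subsequence extraction to get a single common subsequence for both components is a detail the paper leaves implicit, but it is the standard argument intended there.
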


\begin{proof} Lemma \ref{lem.ascoli.arzela.acot} and
	Lemma \ref{lem.ascoli.arzela.asymp} imply that we can use 
	the Arzela-Ascoli type lemma, Lemma \ref{lem.ascoli.arzela}, to obtain uniform convergence. 
\end{proof}

\section{Existence of viscosity solutions to \eqref{ED1}}
\label{sect-limite.viscoso}

In this section we use viscosity arguments to show that a uniform limit of the values of the game (which exists due to
Theorem \ref{teo.conv.unif}) is in fact a viscosity solution to our parabolic/elliptic system \eqref{ED1}.

\textit{First equation.} Let be $\phi\in C^{2}(\Om \times (0,T])$ such that $(u-\phi)(x_{0},t_0)$ has a absolute maximum at $(x_{0},t_0)$
 with $(u-\phi)(x_{0},t_0)=0$ (maximum in the two variables $x$ and $t$). 
 Then exist a sequence $(x_{\eps}, t_\eps)_{\eps>0}$ such that $x_{\eps}\rightarrow x_0$ and 
 $t_{\eps}\rightarrow t_0$ when $\eps\rightarrow 0$ and 
$$
u^{\eps}(y,t)-\phi(y,t)\leq u^{\eps}(x_{\eps},t_\eps)-\phi(x_{\eps},t_\eps)+\eps^{3}
$$
if $y\in\ol{\Om}$, $t\in (0,T]$. Then we obtain 
\begin{equation}
\label{ast}
 u^{\eps}(y,t)-u^{\eps}(x_{\eps},t_\eps) \leq \phi(y,t)-\phi(x_{\eps},t_\eps)+\eps^{3}
\end{equation}
Then, using the DPP for $u^\eps$ at the point $(x_\eps,t_\eps)$ we have 
$$
\displaystyle u^{\eps}(x_\eps,t_\eps)=\eps^{2}v^{\eps}(x_\eps,t_\eps)+(1-\eps^{2})\Big\{\half \sup_{y \in B_{\eps}(x_\eps)}u^{\eps}(y,t_\eps-\eps^{2}) + \half \inf_{y \in B_{\eps}(x_\eps)}u^{\eps}(y,t_\eps-\eps^{2})\Big\} 
$$
and then, adding and subtracting $u^{\eps}(x_\eps,t_\eps -\eps^2)$, we obtain
 $$
 \begin{array}{l}
\displaystyle 0=\eps^{2}v^{\eps}(x_\eps,t_\eps) -  \Big[u^{\eps}(x_\eps,t_\eps) - u^{\eps}(x_\eps,t_\eps -\eps^2)\Big]
+(1-\eps^{2})\half 
\sup_{y \in B_{\eps}(x_\eps)}\Big[u^{\eps}(y,t_\eps-\eps^{2}) - u^{\eps}(x_\eps,t_\eps-\eps^2)\Big]  \\[10pt]
\qquad \displaystyle + (1-\eps^{2}) \half \inf_{y \in B_{\eps}(x_\eps)}
\Big[u^{\eps}(y,t_\eps-\eps^{2}) - u^{\eps}(x_\eps,t_\eps-\eps^2)\Big] - \eps^2 u^{\eps}(x_\eps,t_\eps-\eps^2).
\end{array}
$$
Now, using \eqref{ast} we arrive to
 $$
 \begin{array}{l}
\displaystyle 0\leq \eps^{2}v^{\eps}(x_\eps,t_\eps) -  \Big[\phi(x_\eps,t_\eps) - \phi(x_\eps,t_\eps -\eps^2)\Big]
+(1-\eps^{2})\half 
\sup_{y \in B_{\eps}(x_\eps)}\Big[\phi(y,t_\eps-\eps^{2}) - \phi(x_\eps,t_\eps-\eps^2)\Big]  \\[10pt]
\qquad \displaystyle + (1-\eps^{2}) \half \inf_{y \in B_{\eps}(x_\eps)}
\Big[\phi (y,t_\eps-\eps^{2}) - \phi (x_\eps,t_\eps-\eps^2)\Big] + \eps^3
 - \eps^2 u^{\eps}(x_\eps,t_\eps-\eps^2).
\end{array}
$$
From this point the proof follows from simple Taylor expansions.
  If $\nabla\phi(x_{0},t_0)\neq 0$ then  $\nabla\phi(x_{\eps},t_\eps-\eps^{2})\neq 0$ for $\eps$ small enough. Let us call 
    $$
    w_{\eps}= \frac{\nabla\phi(x_{\eps},t_\eps-\eps^{2})}{\lvert \nabla\phi(x_{\eps},t_\eps-\eps^{2})\rvert} \qquad and \qquad  w_0 = \frac{\nabla\phi(x_0,t_0)}{\lvert \nabla\phi(x_0,t_0)\rvert}
  $$
  Observe that $w_\eps \rightarrow w_0$ when $\eps\rightarrow 0$. Then 
  $$ \sup_{y\in B_{\eps}(x_{\eps})}\phi(y,t_\eps-\eps^{2})\sim\phi(x_{\eps}+\eps w_{\eps},t_\eps-\eps^{2})$$
   and 
   $$
   \inf_{y\in B_{\eps}(x_{\eps})}\phi(y,t_\eps-\eps^{2})\sim\phi(x_{\eps}-\eps w_{\eps},t_\eps-\eps^{2}).
  $$
  Thus
 $$
  \begin{array}{l}
 \displaystyle 0\leq \eps^{2}v^{\eps}(x_\eps,t_\eps) -  \Big[\phi(x_\eps,t_\eps) - \phi(x_\eps,t_\eps -\eps^2)\Big]
 +(1-\eps^{2})\half 
 \Big[\phi(x_{\eps}+\eps w_{\eps},t_\eps-\eps^{2})- \phi (x_\eps,t_\eps-\eps^2) \Big]  \\[10pt]
 \qquad \displaystyle + (1-\eps^{2}) \half 
 \Big[ \phi(x_{\eps}-\eps w_{\eps},t_\eps-\eps^{2})- \phi (x_\eps,t_\eps-\eps^2)  \Big] + \eps^3
 - \eps^2 u^{\eps}(x_\eps,t_\eps-\eps^2).
 \end{array}
 $$
 
Using the Taylor expansion of $\phi$ with respect to the spatial variables at the point $(x_{\eps},t_\eps-\eps^2)$ we obtain
$$ \phi(x_{\eps}+\eps w_{\eps},t_\eps-\eps^2)-\phi(x_{\eps},t_\eps-\eps^2) =\eps \langle \nabla \phi(x_{\eps},t_\eps-\eps^2),w_{\eps}\rangle+\eps^{2} \half \langle D^{2}\phi(x_{\eps},t_\eps-\eps^2)w_{\eps},w_{\eps}\rangle + \textit{o}(\eps^{2})$$
$$ \phi(x_{\eps}-\eps w_{\eps},t_\eps-\eps^2)-\phi(x_{\eps},t_\eps-\eps^2)= -\eps \langle \nabla \phi(x_{\eps},t_\eps-\eps^2),w_{\eps}\rangle+\eps^{2} \half \langle D^{2}\phi(x_{\eps},t_\eps-\eps^2)w_{\eps},w_{\eps}\rangle+\textit{o}(\eps^{2}).$$
So, dividing by $\eps^{2}$, we get
 $$
    \begin{array}{l}
   \displaystyle 0\leq v^{\eps}(x_\eps,t_\eps) - \frac{\phi(x_\eps,t_\eps) - \phi(x_\eps,t_\eps -\eps^2)}{\eps^2}
   +(1-\eps^{2})\half \langle D^{2}\phi(x_{\eps},t_\eps-\eps^2)w_{\eps},w_{\eps}\rangle +\frac{\textit{o}(\eps^{2})}{\eps^2}  
    - u^{\eps}(x_\eps,t_\eps-\eps^2).
   \end{array}
   $$
Taking limit $\eps\rightarrow 0$ we conclude
$$ 0\leq v (x_0,t_0) -
\frac{\partial \phi}{\partial t} (x_0,t_0) +\displaystyle \half \langle D^{2}\phi(x_0,t_0)w_0,w_0\rangle  - u (x_0,t_0) 
$$
i.e.
$$
\frac{\partial \phi}{\partial t} (x_0,t_0) - \displaystyle \half \Delta_{\infty}\phi (x_0,t_0)+u (x_0,t_0)- v (x_0,t_0)\leq 0,
$$
as we wanted to show. 

The reverse inequality for smooth test functions that touches from below the graph of $u$  can be obtained with analogous arguments.

\textit{Second equation.}
Now, let us show that $v$ is a viscosity solution to 
$$ -\frac{\kappa}{2}\Delta v(x,t)+v(x,t)-u(x,t)=0.$$

Let us start by showing that $v$ is a subsolution. For a fix $t>0$, let $\psi \in C^{2}(\Om)$ such that 
$v(x_{0},t)-\psi(x_{0})=0$ and has a maximum of $v(\cdot,t)-\psi$ at $x_{0} \in \Om$. As before, we have the existence of a sequence $(x_{\eps})_{\eps>0}$ such that 
$x_{\eps} \rightarrow x_{0}$ and 
\begin{equation}
\label{ast44}
 v^{\eps}(y,t)-v^{\eps}(x_{\eps},t) \leq \psi(y)-\psi(x_{\eps})+\eps^{3}.
\end{equation}
Therefore, from the DPP \eqref{DPP} at the point $(x_\eps,t)$, we obtain
$$
0= (u^{\eps}(x_{\eps},t)-v^{\eps}(x_{\eps},t))+(1-\eps^{2})\frac{1}{\eps^{2}}\kint_{B_{\eps}(x_{\eps})}(v^{\eps}(y,t)-v^{\eps}(x_{\eps},t))dy.
$$
Using \eqref{ast44} we get
$$
0\leq (u^{\eps}(x_{\eps},t)-v^{\eps}(x_{\eps},t))+(1-\eps^{2})\frac{1}{\eps^{2}}\kint_{B_{\eps}(x_{\eps})}(\psi(y)-\psi(x_{\eps}))dy.
$$

From Taylor's expansions we obtain
$$ \frac{1}{\eps^{2}}\kint_{B_{\eps}(x_{\eps})}(\psi(y)-\psi(x_{\eps}))dy=\frac{\kappa}{2} \sum\limits_{j=1}^{n} \partial_{x_{j}x_{j}}\psi(x_{\eps})=\frac{\kappa}{2}\Delta \psi(x_{\eps}),$$
with 
$$\kappa = \frac{1}{\eps^{n}\lvert B_{1}(0)\rvert}\int_{B_{1}(0)}z_{j}^{2}\eps^{n}dz=\frac{1}{\lvert B_{1}(0)\rvert}\int_{B_{1}(0)}z_{j}^{2}dz. $$
Taking limits as $\eps \rightarrow 0$ we get
\begin{equation}
-\frac{\kappa}{2}\Delta \psi(x_{0})+v(x_{0},t)-u(x_{0},t) \leq 0.
\end{equation}

The fact that $v$ is a supersolution is similar.

\subsection{Comparison principle and uniqueness for the limit problem}

Our goal is to show uniqueness for viscosity solutions to our system \eqref{ED1}.
To this end we follow ideas from \cite{BB,nosotros,Mitake} (see also \cite{Jen} for uniqueness results concerning the
infinity Laplacian). This uniqueness result implies that
the whole sequence $u^\eps,v^\eps$ converge as $\eps \to 0$. As usual in viscosity theory, uniqueness
follows from a comparison principle. The comparison principle for the elliptic counterpart
of \eqref{ED1} was obtained in \cite{nosotros}. 

\begin{theorem} \label{teo-compar} Assume that $(u_1, v_1)$ and $(u_2, v_2)$ are a bounded viscosity subsolution 
and a bounded viscosity supersolution of \eqref{ED1}, respectively, and also assume that $u_1 \leq u_2$ and $v_1 \leq v_2$ on 
$\partial_p \Omega_T$. Then, 
$$
u_1 \leq u_2 \qquad \mbox{and} \qquad v_1 \leq v_2,
$$
in $\Omega \times (0,T)$.
\end{theorem}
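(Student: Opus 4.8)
The plan is to adapt the doubling of variables technique for viscosity solutions (the Crandall--Ishii--Lions theorem on sums, see \cite{CIL}) to our weakly coupled system, following the strategy already used for the elliptic counterpart in \cite{nosotros}; the genuinely new point is to control the time variable of the parabolic first equation simultaneously with the coupling. The crucial structural observation is that the zeroth order coupling matrix $\left(\begin{smallmatrix} 1 & -1\\ -1 & 1\end{smallmatrix}\right)$ is only positive semidefinite: the direction $(1,1)$ is degenerate, since adding the same constant to $u$ and $v$ leaves both equations invariant. Consequently a naive comparison at an interior maximum only forces equality of the two suprema, not a contradiction, and this degeneracy must be broken first.

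To break it I would reduce to a \emph{strict} subsolution. For $\sigma>0$ set
\[
u_1^\sigma = u_1 - \frac{\sigma}{T-t}, \qquad v_1^\sigma = v_1 - \frac{\sigma}{T-t}.
\]
Using the $1$-homogeneity of $\Delta_\infty$ and the fact that the same shift is subtracted from both components, a test function computation shows that $(u_1^\sigma,v_1^\sigma)$ is again a subsolution of \eqref{ED1}: the second equation is unchanged because the two $\sigma/(T-t)$ terms cancel in $v-u$, while the first equation becomes strict, picking up the extra term $-\sigma/(T-t)^2<0$ coming from $\partial_t$. Since $u_1^\sigma<u_1$ and $v_1^\sigma<v_1$, we still have $u_1^\sigma\le u_2$ and $v_1^\sigma\le v_2$ on $\partial_p\Om_T$, and in addition $u_1^\sigma\to-\infty$ as $t\to T$. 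It therefore suffices to prove $u_1^\sigma\le u_2$ and $v_1^\sigma\le v_2$ and then let $\sigma\to 0$.

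Next I would argue by contradiction. Set $M_1 = \sup_{\Om_T}(u_1^\sigma-u_2)$, $M_2=\sup_{\Om_T}(v_1^\sigma-v_2)$ and $M=\max\{M_1,M_2\}$, and suppose $M>0$. The boundary and initial inequalities force any positive supremum to be attained in the parabolic interior, and the blow up of $\sigma/(T-t)$ keeps it away from $t=T$. Whichever component realizes $M$, I would run the doubling argument on the corresponding equation: for $u$ one doubles in space \emph{and} time (penalizing $|x-y|^2+|t-s|^2$, with a higher order penalization where needed to cope with the singularity of $\Delta_\infty$), while for $v$ one doubles only in space, treating $t$ as a frozen parameter, exactly as in \cite{nosotros}. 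In both cases the theorem on sums guarantees that the second order contributions, and in the parabolic case the time derivative, carry a favorable sign, so that after subtracting the sub- and supersolution inequalities only the zeroth order coupling survives. If the maximum is realized by the $u$-component, the strict first equation yields at the contact point a deficit $\gamma>0$, namely $(u_1^\sigma-u_2)-(v_1^\sigma-v_2)\le -\gamma$, whence $M_1 \le M_2 - \gamma < M_2 \le M = M_1$, a contradiction. If instead it is realized by the $v$-component, the (non strict) second equation gives $M_2\le (u_1^\sigma-u_2)\le M_1\le M=M_2$, which forces $M_1=M_2=M$ and shows that $u_1^\sigma-u_2$ attains the value $M$ at the same point; we are then back in the strict $u$-case and reach the same contradiction.

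Hence $M\le 0$, that is $u_1^\sigma\le u_2$ and $v_1^\sigma\le v_2$ in $\Om\times(0,T)$, and letting $\sigma\to 0$ completes the argument. The step I expect to be the main obstacle is the rigorous implementation of the doubling for the singular operator $\Delta_\infty$: one must distinguish whether the limiting gradient at the contact point vanishes or not, invoking the envelopes $P_*$ and $P^*$ when it does, and at the same time keep track of the term coupling the two \emph{different} operators, ensuring that the cross evaluation $v_1^\sigma(\hat x,\hat t)-v_2(\hat y,\hat s)$ converges to a quantity bounded by $M_2$ as the penalization parameters tend to zero. This simultaneous bookkeeping in space and time, for two operators of different nature, is precisely the additional difficulty absent from the purely elliptic treatment of \cite{nosotros}.
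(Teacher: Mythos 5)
Your overall architecture is sound and is essentially the one the paper follows: reduce to a strict subsolution by subtracting $\sigma/(T-t)$ (your variant, which perturbs \emph{both} components so that the zeroth-order terms cancel in the second equation, is in fact cleaner than the paper's, which perturbs only $u_1$), argue by contradiction on $M=\max\{M_1,M_2\}$, use the elliptic equation to show that the case $M=M_2$ reduces to the case $M=M_1$, and then extract a contradiction from the strict parabolic inequality after a doubling argument in space and time (with the cross-term bookkeeping $v_1^\sigma(\hat x,\hat t)-v_2(\hat y,\hat s)\le M_2+o(1)$ that you correctly identify as necessary). This case analysis on which component realizes the maximum is exactly the mechanism in \cite{Mitake} and \cite{nosotros}.

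However, there is a genuine gap precisely at the step you flag as "the main obstacle," and the remedy you propose does not work. If the doubling penalization is quadratic and the contact points satisfy $\hat x_\alpha=\hat y_\alpha$, the common gradient $p=(\hat x_\alpha-\hat y_\alpha)/\alpha$ vanishes, and the envelope inequalities read
$$
s_1-\tfrac12\lambda_{\max}(X)+\cdots\le -c
\qquad\text{and}\qquad
s_2-\tfrac12\lambda_{\min}(Y)+\cdots\ge 0,
$$
since $P_*$ at $p=0$ selects the largest eigenvalue and $P^*$ the smallest. The theorem on sums only yields $X\le Y$, which gives $\lambda_{\max}(X)\le\lambda_{\max}(Y)$ and $\lambda_{\min}(X)\le\lambda_{\min}(Y)$, but \emph{not} $\lambda_{\max}(X)\le\lambda_{\min}(Y)$; so subtracting the two inequalities produces nothing, and no contradiction follows. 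This failure is exactly why comparison for $\Delta_\infty$ is a deep result (Jensen's theorem \cite{Jen}) and cannot be obtained by a routine doubling with envelopes. The paper closes this hole with the Barles--Busca machinery \cite{BB}: (i) sup/inf convolutions make $u_1,v_1$ semi-convex and $u_2,v_2$ semi-concave in space; (ii) a translation argument (borrowed from \cite{nosotros}) produces a sequence $h_n\to 0$ such that at any maximum point $y$ of $w_1(\cdot+h_n)-w_2(\cdot)$ one has $Dw_1(y+h_n)=Dw_2(y)\neq 0$, so the singular set of $\Delta_\infty$ is avoided altogether; and (iii) since the convolutions destroy the monotone coupling, the change of variables $U_i=\psi_\varepsilon(u_i)$, $V_i=\psi_\varepsilon(v_i)$ with $\psi_\varepsilon=\varphi_\varepsilon^{-1}$ restores a strictly monotone system, after which the time penalization $|t-r|^2/(2\varepsilon)$ and the strict parabolic inequality give the contradiction. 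Without an ingredient of type (ii) (or an equivalent, e.g.\ Jensen's a.e.\ argument or the Armstrong--Smart approach \cite{AS}), your proof cannot be completed as written.
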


As an immediate corollary of this comparison result we obtain the desired uniqueness for \eqref{ED1}. 

\begin{corol} \label{corl-unicidad}
There exists a unique viscosity solution to \eqref{ED1}.
\end{corol}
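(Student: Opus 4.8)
The plan is to read the corollary as the conjunction of two facts already assembled in the paper—\emph{existence}, coming from the game-theoretic construction, and \emph{uniqueness}, coming from the comparison principle of Theorem~\ref{teo-compar}—and to record how the symmetry of the comparison argument upgrades ``any two solutions agree'' into a genuine uniqueness statement, while also promoting subsequential convergence to convergence of the whole family.

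For existence, I would invoke Theorem~\ref{teo.conv.unif}: the values $(u^\eps,v^\eps)$ of the game converge, along a subsequence $\eps_k\to 0$, uniformly on $\ol{\Om}\times[0,T]$ to a continuous pair $(u,v)$. The computations in Section~\ref{sect-limite.viscoso} show that this limit solves the two interior equations of \eqref{ED1} in the viscosity sense (the first in the parabolic sense of Definition~\ref{def.sol.viscosa.1}, the second in the elliptic sense of Definition~\ref{def.sol.viscosa.2} with $t$ frozen). What remains is to check that $(u,v)$ attains the prescribed data on the parabolic boundary: here I would use the boundary estimate of Lemma~\ref{lem.ascoli.arzela.asymp}, which gives $|u^\eps(x_0,t_0)-w(y,s)|<\delta$ whenever $(x_0,t_0)\in\Om_T$ is close to $(y,s)\in\partial_p\Om_T$, with $w=\ol f$ on the lateral boundary and $w=u_0$ at $t=0$. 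Letting $\eps\to 0$ forces $u(y,s)=w(y,s)$ on $\partial_p\Om_T$, and similarly $v=\ol g$ on the lateral boundary. Thus $(u,v)$ satisfies all of \eqref{ED1}, establishing existence. The compatibility condition $u_0(x)=f(x,0)$ on $\partial\Om$ is precisely what makes $w$ well defined and Lipschitz along $\partial_p\Om_T$, so this step is not merely formal.

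For uniqueness, suppose $(u_1,v_1)$ and $(u_2,v_2)$ are two bounded viscosity solutions of \eqref{ED1}. Each is simultaneously a subsolution and a supersolution, and both share the same boundary and initial data, so in particular $u_1\le u_2$ and $v_1\le v_2$ on $\partial_p\Om_T$. Applying Theorem~\ref{teo-compar} with $(u_1,v_1)$ as subsolution and $(u_2,v_2)$ as supersolution yields $u_1\le u_2$ and $v_1\le v_2$ in $\Om\times(0,T)$. Exchanging the roles of the two pairs and applying the theorem a second time gives the opposite inequalities, whence $u_1\equiv u_2$ and $v_1\equiv v_2$. This is the whole content of the corollary once existence is in hand.

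Finally, uniqueness promotes the subsequential convergence of Theorem~\ref{teo.conv.unif} to convergence of the full family: if some sequence $\eps_j\to 0$ failed to give $(u^{\eps_j},v^{\eps_j})\to(u,v)$, the uniform bound of Lemma~\ref{lem.ascoli.arzela.acot} together with the asymptotic continuity of Lemma~\ref{lem.ascoli.arzela.asymp} would let me extract a further subsequence converging to a continuous limit, which by the argument above is again a viscosity solution of \eqref{ED1} and hence, by uniqueness, equals $(u,v)$—a contradiction. The hard part of the whole scheme is not the uniqueness bookkeeping but the boundary attainment in the existence half: one must be sure the discrete game values, which are genuinely discontinuous inside $\Om_T$, nonetheless produce a limit that takes on $f$, $g$ and $u_0$ continuously, and this is exactly what the near-boundary strategies packaged in Lemma~\ref{lema.estim.ToW} and Lemma~\ref{lem.ascoli.arzela.asymp} deliver.
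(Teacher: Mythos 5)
Your proposal is correct and follows essentially the same route as the paper: existence from the uniform limit of the game values (Theorem~\ref{teo.conv.unif} together with the viscosity arguments of Section~\ref{sect-limite.viscoso} and the boundary estimates of Lemma~\ref{lem.ascoli.arzela.asymp}), and uniqueness by applying the comparison principle of Theorem~\ref{teo-compar} twice with the roles of the two solutions exchanged. Your closing remark that uniqueness upgrades subsequential to full-family convergence is also exactly the paper's observation.
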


\begin{proof}[Proof of Theorem \ref{teo-compar}]
Following the classical ideas given in \cite{CIL} we perturb the subsolution defining the function
 $$\widetilde{u}_1(x,t):=u_1(x,t)-\frac{\varepsilon}{T-t},\quad\varepsilon>0,$$
  that satisfies, in the viscosity sense,
  $$\begin{cases}
			\displaystyle \frac{\partial (\widetilde{u}_1)}{\partial t} (x,t)- \frac12 \Delta_\infty \widetilde{u}_1 (x,t)
			+ \widetilde{u}_1 (x,t) - v_1(x,t)
			\leq -\frac{\delta}{T-t}
			-\frac{\delta}{(T-t)^2}\leq -c <0 &\text{in }\Omega\times(0,T),\\[10pt]
			\widetilde{u}_1\to -\infty, &\text{when }
				t\to T^{-}.\\
				\end{cases}
				$$
It is clear that if we prove $\widetilde{u}_1(x,t)\leq v_1(x,t)$ for $x\in\Omega$ and $t\in (0,T)$ then the conclusion follows by taking $\delta\to 0$.  Therefore, we can assume that we have a strict subsolution,
   $$\begin{cases}
			\displaystyle \frac{\partial ({u}_1)}{\partial t} (x,t)- \frac12 \Delta_\infty {u}_1 (x,t)
			+ {u}_1 (x,t) - v_1(x,t)
			\leq -c <0 &\text{in }\Omega\times(0,T),\\[10pt]
			{u}_1\to -\infty, &\text{when }
				t\to T^{-}.
				\end{cases}
				$$
				
 Now, suppose, by contradiction, that
 $$\sup_{x\in\overline{\Omega},\, t\in [0,T]} \max \Big\{u_1(x,t)-u_2(x,t); v_1 (x,t) -v_2(x,t) \Big\}:=\eta>0.$$

We may assume further that $u_1$ and $v_1$ are semi-convex in space
and $u_2$ and $v_2$ are semi-concave in space by using sup and inf convolutions and restricting the problem to a slightly smaller domain 
if necessary (see \cite{Mitake} for extra details). 
We now perturb $u_1$ and $v_1$. For $\alpha > 0$, take $\Omega_\alpha := \{x \in \Omega : 
dist(x,\partial \Omega) > \alpha\}$ and for $|h|$ sufficiently small we consider
$$
\begin{array}{l}
\displaystyle 
M(h):=\max \Big\{ \max_{x \in \Omega,\, t,r \in [0,T]} u_1(x+h,t)- u_2(x,r) -\frac{|t-r|^2}{2\varepsilon}; \max_{x \in \Omega,\, t,r \in [0,T]} (v_1(x+h,t)- v_2(x,r))\Big\}  \\[10pt]
\qquad \qquad \displaystyle = w_1(x_h+h,t_h)- w_2(x_h,r_h) 
\end{array}
 $$
for $w=u \mbox{ or } v$ (we will call $w$ the component at which the maximum is achieved)
 and some $x_h \in \Omega_{|h|}$, $t_h,r_r$ (notice that these quantities also depend on $\varepsilon$ 
 but we omit this dependence to simplify the notation). 
 Since $M(0) > 0$, for $|h|$ small enough, we have $M(h)>0$ and the above maximum is the same if we take it over $\Omega_\alpha$ any $\alpha >0$ sufficiently small and fixed.

 As in \cite{nosotros} we obtain that 
there exists a sequence $h_n \to 0$ such that at any maximum point $y \in \Omega_{|h_n|}$ 
 of $$\max \Big\{ \max_{x\in \Omega_{|h_n|}} (u_1(x + h_n) - u_2(x)) ;  \max_{x\in \Omega_{|h_n|}} (u_1(x + h_n) - u_2(x))\Big\},$$ 
 we have $$Dw_1(y + h_n) =
Dw_2(y) \neq 0$$ for $ n \in \mathbb{N}$.
 
 Now we consider, as in \cite{BB}, the functions $\varphi_\eps$ defined by
$$
\varphi_\eps ' (t) = \exp \left( \int_0^t \exp \Big( - \frac{1}{\eps}  (s- \frac{1}{\eps}) \Big) ds\right).
$$
These functions $\varphi_\eps$ are close to the identity, $\varphi_\eps ' > 0$, $\varphi_\eps '$ converge to $1$ as $\eps \to 0$
and $\varphi_\eps ''$ converge to $0$ as $\eps \to 0$ with $(\varphi_\eps ''(s))^2 > \varphi_\eps ''' (s) \varphi_\eps ' (s)$,
see \cite{BB}.
With $\psi_\eps = \varphi_\eps^{-1}$ we perform the changes of variables
$$
U_{i}^\eps =\psi_\eps (u_i), \qquad V_i^\eps = \psi_\eps (v_i), \qquad i=1,2.
$$
It is clear to see that $U_1$, $V_1$ are semi-convex in space and $U_2$, $V_2$ are semi-concave in space. 
We have that
$$\max \Big\{ \max_{x,t,r} (U_1^\eps (x+h_n,t)- U_2^\eps (x,r) -\frac{|t-r|^2}{2\varepsilon}) ;  \max_{x,t,r} (V_1^\eps (x+h_n,t)- V_2^\eps (x,r)) \Big\}$$
is achieved at some point $x_\eps,t_\eps, r_\eps$. Notice that $U_1$, $V_1$, $-U_2$ and $-V_2$ are bounded from above, 
 $\overline\Omega \times [0,T]$ is compact and $\lim_{t\to T^{-}}U_1\to -\infty$. From standard computations in viscosity theory we get
 \begin{equation} \label{import}
\frac{1}{\varepsilon} |t_{\varepsilon}-r_{\varepsilon}|^{2}\to 0,\quad \mbox{ as } \varepsilon \to 0.
\end{equation}
Extracting a subsequence, if necessary, we can assume that $x_\eps \to x_{h_n}$ and 
$(t_\varepsilon,r_\varepsilon)\to (\widetilde t_n, \widetilde r_n)\in [0,T)^2,$ and we get
$ \widetilde t=\widetilde r$. By using that $u_1(x,t)\leq u_2(x, t)$, $v_1(x,t)\leq v_2(x, t)$ on $\partial\Omega \times (0,T)$ 
and $u_1(x,0) \leq u_2(x,0)$, we have that
$x_h \notin \partial\Omega$ and $\widetilde t=\widetilde r > 0$. Therefore, we get
\begin{equation}\label{interior}
\min\{d(x_h,\partial\Omega)\}>0, \quad \mbox{ and } \quad
\min  \{t_{\varepsilon},\, r_{\varepsilon}\}>0.
\end{equation}

Since we have $|Dw_1(x_n + h_n,\widetilde t_n)| = |Dw_2 (x_n, \widetilde r_n)| > \delta(n)$, 
we deduce that for $\eps$ sufficiently small, it holds that $|DW_1^\eps(x_\eps + h_n, \widetilde t_n)| = |DW_2^\eps (x_\eps, \widetilde r_n)| 
\geq \delta(n)/2$.

 Now, we have, as in \cite{nosotros}, that $U_1$, $V_1$, $U_2$ and $V_2$ verify a strictly monotone system
 (with a strict inequality in the first equation). In fact, the pair
 $(U_1,V_1)$ verifies the equations (in the viscosity sense)
$$
\begin{array}{rl}
\displaystyle 0 >-c & \displaystyle \geq \frac{\partial u_1}{\partial t} - \displaystyle \half \Delta_{\infty}u_1 + u_1 - v_1 \\[10pt]
& \displaystyle = \varphi ' (U_1) \frac{\partial U_1}{\partial t} - \frac12 \varphi ' (U_1)\Delta_\infty U_1 
-  \frac12 \varphi''(U_1)|DU_1|^2  + \varphi(U_1) - \varphi(V_1 )\\[10pt]
& \displaystyle = \varphi ' (U_1) \Big( \frac{\partial U_1}{\partial t} - \frac12 \Delta_\infty U_1 
-  \frac12 \frac{\varphi''(U_1)}{\varphi ' (U_1)}|DU_1|^2  + 
\frac{\varphi(U_1) - \varphi(V_1 )}{\varphi ' (U_1)}\Big),
\end{array}
$$
and 
$$
\begin{array}{rl}
\displaystyle 0 & \displaystyle = -   \displaystyle \frac{\kappa}{2} \Delta v + v - u \\[10pt]
& =  \displaystyle - \frac{\kappa}{2}  \Big( \varphi ' (V_1)\Delta V_1 +  \varphi''(V_1)|DV_1|^2  \Big) + \varphi(V_1 ) - \varphi(U_1  ) \\[10pt]
& =  \displaystyle  \varphi ' (V_1) \Big(- \frac{\kappa}{2} \Delta V_1 - \frac{\kappa}{2}   \frac{\varphi''(V_1)}{\varphi ' (V_1)}
|DV_1|^2 (x) + \frac{\varphi(V_1 ) - \varphi(U_1  )}{ \varphi ' (V_1)}  \Big),
\end{array}
$$
and similar equations also hold for $(U_2,V_2)$.
 Thus, 
from the strict monotonicity, the strict inequality for the parabolic equation and using \eqref{import}, we get the desired contradiction. 
See the proof of \cite{BB}, Lemma 3.1, for a more detailed discussion.
\end{proof}

\medskip

{\bf Acknowledgements.} partially supported by 
CONICET grant PIP GI No 11220150100036CO
(Argentina), PICT-2018-03183 (Argentina) and UBACyT grant 20020160100155BA (Argentina).


\end{document}